\DeclareMathOperator*{\argmin}{arg\,min}
\DeclareMathOperator*{\diag}{Diag}
\DeclareMathOperator*{\dom}{dom}
\def\A{{\mathcal A}}
\def\O{{\mathcal O}}
\newtheorem{lemma}{Lemma}[section]
\newtheorem{definition}{Definition}[section]
\newtheorem{theorem}{Theorem}[section]
\newtheorem{assumption}{Assumption}[section]
\newtheorem{remark}{Remark}[section]
\title {An Efficient Semismooth Newton Method for Adaptive Sparse Signal Recovery Problems}
\author{Yanyun Ding\thanks{Department of Operations Research and Information Engineering, Beijing University of Technology, Beijing 100124, P.R. China (Email: dingyanyunhenu@163.com).}
\and
Haibin Zhang\thanks{Department of Operations Research and Information Engineering, Beijing University of Technology, Beijing 100124, P.R. China (Email: zhanghaibin@bjut.edu.cn).}
\and
Peili Li\thanks{School of Statistics, Key Laboratory of Advanced Theory and Application in Statistics and Data Science-MOE, East China Normal University, Shanghai 200062, P.R. China (Email: plli@sfs.ecnu.edu.cn).}
\and
Yunhai Xiao\thanks{School of Mathematics and Statistics,
Henan University, Kaifeng 475000, P.R. China (Email: yhxiao@henu.edu.cn).}
}
\date{\today}
\begin{document}
\maketitle

\begin{abstract}
We know that compressive sensing  can establish stable sparse recovery results from highly undersampled data under a restricted isometry property  condition. In reality, however,  numerous problems are coherent, and vast majority  conventional methods might work not so well.
Recently, it was shown that using the difference between $\ell_1$- and $\ell_2$-norm as a regularization always has superior performance.
In this paper, we propose an adaptive $\ell_p$-$\ell_{1-2}$ model where the $\ell_p$-norm with $p\geq 1$ measures the data fidelity and the $\ell_{1-2}$-term measures the sparsity.
This proposed model has the ability to deal with different types of noises and extract the sparse property even under high coherent condition.
We use a proximal majorization-minimization technique to handle the nonconvex regularization term and then employ a semismooth Newton method to solve the corresponding convex relaxation subproblem.
We prove that the sequence generated by the semismooth Newton method admits fast local convergence rate to the subproblem under some technical assumptions.
Finally, we do some numerical experiments to demonstrate the superiority of the proposed model and the progressiveness of the proposed algorithm.
\end{abstract}
{\bf Key words.}
 Compressive sensing, $\ell_p$-$\ell_{1-2}$ minimization, proximal majorization-minimization, Clarke subdifferential, semismooth Newton method.

\section{Introduction}\label{intro}
It is well known that compressive sensing (CS) is a novel signal acquisition paradigm to sense sparse signals by acquiring a set of incomplete or even noises polluted measurements, and  subsequently recovers the signals by solving an optimization problem.
Let $\underline{x}\in\mathbb{R}^n$ be a compressible or (approximately) sparse signal under a suitable basis (e.g.  Fourier or wavelet basis).
The main idea of CS is to firstly project $\underline{x}$ onto a certain subspace via a linear
operator $A$, i.e, $A\underline{x}=b$ with $b\in\mathbb{R}^m$ and
$m\ll n$, and then reconstruct $\underline{x}$ from the undersampled data $b$ via finding the sparest solution of the following underdetermined linear system:
$$
Ax=b.
$$
Besides, in the process of acquiring, storing, transmitting, or displaying,
the undersampled data may be inevitably degenerated by noise.
In this case, it is from the basic theory of CS \cite{Cand3,Donoho4,Don10} that, the task of recovering $\underline{x}$ can be characterized as
the following $\ell_1$-norm regularized least square
$$
   \min_{x\in\mathbb{R}^n} \frac{1}{2}\|Ax-b\|^2_2+\lambda\|x\|_1,
$$
where $\|\cdot\|_1$ is an $\ell_1$-norm function,
and $\lambda>0$ is a weighting parameter to balance both terms for minimization.
A deterministic result shows that it is possible to recover the original signal  by $\ell_1$-norm minimization when the number of nonzeros of $\underline{x}$ is less than $(1+1/\zeta)/2$ \cite{Don10,Gri11},
  where $\zeta$ is the so-called mutual coherence of matrix $A$.
This fact indicates that using the $\ell_1$-norm alone may not perform well for highly coherent matrices, and hence, some improvements are highly required.

A valid approach among such efforts is the using of the difference between the $\ell_1$- and $\ell_2$-norm as a regularization \cite{lou13,lou16,lou14,lou15}, which always has superior performance over the $\ell_1$-norm and even the $\ell_q$-norm with $q\in(0,1)$ alone \cite{chen17,chen18,chen20} in improving the sparsity when the sensing matrix $A$ is highly coherent.
Based on the fact, the recovering task of $\underline{x}$ using $\ell_{1}$-$\ell_{2}$-norm can be explicitly formulated as
\begin{equation}\label{l1l2}
   \min_{x\in\mathbb{R}^n} \frac{1}{2}\|Ax-b\|^2_2+\lambda(\|x\|_1-\|x\|_2),
\end{equation}
where ``$\|x\|_1-\|x\|_2$" (short for $\ell_{1-2}$) is called a regularization.
It should be noted that despite having some attractive features as shown by Yin et al. \cite{lou15}, the quality of the resolutions derived by model (\ref{l1l2}) heavily relies on the knowing of the standard deviation of the noise.
Fortunately, the square-root-loss estimator of Belloni et al. \cite{Bell21}, say $\|Ax-b\|_2$,  is proved to be achieving the near-oracle rates of convergence without knowing the standard deviation of  the noise under suitable design conditions\cite{bell22,cui23}.
The data fidelity with form $\|Ax-b\|_1$ has also been evidently shown to be more robust when the noises are not normal but heavy-tailed or heterogeneous, see e.g.,  \cite{bell24,lu26,wang25,xiu28}.
Besides,  the data fidelity with form $\|Ax-b\|_{\infty}$ is also known to be very suitable for dealing with the uniformly distributed noise and quantization error, see e.g., \cite{wen,xue27,zhang40}.
Therefore, a more natural question is whether or not one can design a more flexible and robust reconstruction model as well as an efficient algorithm which is capable of dealing with all the three types of noise mentioned above?

The main purpose of this paper is to answer the question positively in the sense of considering the following $\ell_p$-$\ell_{1-2}$  minimization problem
\begin{equation}\label{lp}
   \min_{x\in\mathbb{R}^n} \|Ax-b\|_p+\lambda(\|x\|_1-\beta\|x\|_2),
\end{equation}
where $\beta \geq 0$  and $\|\cdot\|_p$ is a $\ell_p$-norm function whose proximal mapping is assumed to be strongly semismooth, e.g., $p=1,2,\infty$ as stated in \cite[Remark 2]{lin38}.
The model (\ref{lp}) has many attractive features that, the $\|Ax-b\|_p$ data fidelity term makes it can deal with different types of noise if $p$ is chosen adaptively, and the $\ell_{1-2}$ regularization ensures the sparse property can be well extracted even under high coherent condition.
For example,  the choices of $p=1$, $2$ and $\infty$ is appropriate for log-normal noise or Laplace noise, Gaussian noise, and uniformly distributed noise, respectively.
Nevertheless, comparing with (\ref{l1l2}), it is much more difficult and challenging to solve due to the non-differentiability of the $\ell_p$-norm along with
the nonconvexity of the difference between the $\ell_1$-norm and $\ell_2$-norm.

Noting that  the ``$\|x\|_1-\beta\|x\|_2$" term is actually  a difference of two convex functions,
Yin et al. \cite{lou15} linearized the concave term ``$-\|x\|_2$"
at a given point to get a convex relaxation minimization problem, and then employed an alternating direction methods of multipliers on the resulting convex relaxation problem.
Beside, Tang et al. \cite{tang1} added a pair of proximal terms to the convex problems, and then
proposed an efficient semismooth Newton method.
Inspired from both works of  \cite{tang1,lou15}, in this paper, we introduce an efficient proximal majorization-minimization semismooth Newton method algorithm to solve the general problem (\ref{lp}). Our algorithm firstly linearizes the concave term in (\ref{lp}) at current iteration to
get a convex nonsmooth minimization, adds a pair of proximal terms to the objective function, and then uses an efficient semismooth Newton method to solve the involved semismooth nonlinear system to ensure faster local convergence rate. It should be noted that, the first contribution of this paper is that our model (\ref{lp}) covers the root-square-loss as a special case, which is more flexible than the model considered by Tang et al. \cite{tang1}.
Besides, we focus on a nonsmooth concave term``$-\|x\|_2$" but not a differentiable concave function as in \cite{tang1}. We must clarify that, the diversification of the $\ell_p$-norm in (\ref{lp}) may make the positive definiteness of generalized Hessian at the solution be violated, which poses fundamental challenges to the design of efficient algorithms.
The second contribution of this paper is to address this issue in the sense that, we employ the semismooth Newton method to solve the resulting subproblem with choices $p=1,2,\infty$, and prove that each limit point of the generated sequence converges to an optimal solution of the subproblem and admits fast local convergence rate  under some technical assumptions.
Finally, we do a series of numerical experiments which demonstrate that the superiority of the proposed model (\ref{lp}) is remarkable and the proposed algorithm is very efficient.

The remaining parts of this paper are organized as follows. In Section \ref{preli}, we summarize some
basic definitions or concepts for subsequent arithmetic design and numerical implementations.
In Section \ref{comp}, we give our motivation and construct our algorithm. Besides, the convergence result
for the proposed algorithm is also included.
Numerical experiments and performance comparisons are reported in
Section \ref{num4}. Finally, the paper is concluded with some remarks in Section \ref{con5} .
\section{Preliminaries}\label{preli}
Let $\mathbb{R}^{n}$ be an $n$-dimensional Euclidean space endowed with an inner product $\langle \cdot,\cdot \rangle $ and its induced norm $\|\cdot\|_2$, respectively.
Let $f:\mathbb{R}^{n}\rightarrow(-\infty,+\infty]$ be a closed proper convex function.
The effective domain of $f$, which we denote by $\text{dom}(f)$, is defined as $\text{dom}(f)=\{x| f(x)<+\infty\}$.
The directional derivative of $f$ at a point $x\in\text{dom} (f)$ along a direction $d\in \mathbb{R}^{n}$ is
defined as
$$
f^{\prime}(x ; d):=\lim_{\tau \downarrow 0}\frac{f(x+\tau d)-f(x)}{\tau}.
$$
We say $\bar{x}\in\text{dom} (f)$ is the d-stationary point of $f$, if it satisfies
$f^{\prime}(\bar{x} ; d)\geq 0$, $\forall d\in \mathbb{R}^n$.
The Fenchel conjugate of $f$ at $x^*$ is defined by $f^{*}(x^*):=\sup _{x \in \mathbb{R}^{n}}\{\langle x, x^*\rangle-f(x)\}$.

Denote $\Phi_{t f}(x)$ be the  Moreau envelope function \cite{moreau31,Yosida32} of $f$ with parameter $t >0$, i.e.,
 $$
\Phi_{t f}(x) :=\min _{y \in \mathbb{R}^{n}}\big\{f(y)+\frac{1}{2 t}\|y-x\|_2^{2}\big\}, \quad \forall x \in \mathbb{R}^{n}.
$$
The proximal mapping of $f$ with  $t >0$ is defined as
$$
\operatorname{Prox}_{tf}(x) :=\underset{y \in \mathbb{R}^{n}}{\operatorname{argmin}}\big\{f(y)+\frac{1}{2t}\|y-x\|_2^{2}\big\}, \quad \forall x \in \mathbb{R}^{n}.
$$
From \cite{Hiriart33,Lema34}, it is known that $\Phi_{t f}(x)$ is continuously differentiable and convex with its gradient being given by
$$\nabla \Phi_{t f}(x)=t^{-1}(x-\operatorname{Prox}_{t f}(x)), \quad \forall x \in \mathbb{R}^{n}.$$
Besides, from \cite[Theorem 35.1]{Rock35}, we have the following famous Moreau's identity theorem, which plays a key rule in the subsequent analysis
\begin{equation}\label{moreau}
\text{Prox}_{tf}(x)+t\text{Prox}_{f^{*}/t}(x/t)=x.
\end{equation}

Next we quickly review some basic concepts and definitions which are used frequently in the subsequent arithmetic developments and numerical implementations.
Semismoothness was originally introduced by Miffin \cite{Miff29} for functionals, and the concept was extended to vector valued functions by Qi and Sun \cite{qi30}.

\begin{definition}\label{def1}(semismoothness \cite{Miff29,qi30}).
Let $\Psi: \mathcal{O}\subseteq \mathbb{R}^n\rightarrow \mathbb{R}^m$ be a locally Lipschitz contimuous function and $\mathcal{K}: \mathcal{O}\rightrightarrows\mathbb{R}^{m\times n}$ be a nonempty and compact valued, upper-semicontinuous set-valued mapping on the open set $\mathcal{O}$.
It is said $\Psi$ to be semismooth at $\nu\in\mathcal{O}$ with respect to the set-valued mapping $\mathcal{K}$ if $\Psi$ is directionally differentiable at $\nu$ and for any $\Gamma\in\mathcal{K}(\nu+\Delta \nu)$ with $\Delta\nu\rightarrow 0$ such that
$$
\Psi(\nu+\Delta \nu)-\Psi(\nu)-\Gamma \Delta \nu=o(\|\Delta \nu\|_2).
$$
Besides, it is said $\Psi$ to be $\gamma$-order $(\gamma>0)$ (strongly, if $\gamma=1$ ) semismooth at $\nu$ with respect to $\mathcal{K}$ if $\Psi$ is semismooth at $\nu$ and for any $\Gamma \in \mathcal{K}(\nu+\Delta \nu)$ such that
$$
\Psi(\nu+\Delta \nu)-\Psi(\nu)-\Gamma \Delta \nu=O(\|\Delta \nu\|_2^{1+\gamma}).
$$
Moreover, we say that $\Psi$ is a semismooth $(\gamma$-order semismooth, strongly semismooth) function on $\mathcal{O}$ with respect to $\mathcal{K}$ if it is semismooth $(\gamma$-order semismooth, strongly semismooth) at every $\nu \in \mathcal{O}$ with respect to $\mathcal{K}$.
\end{definition}
\begin{theorem}\label{the1}(Rademacher's Theorem \cite{Rock36})
Suppose that $\Psi: \mathbb{R}^n\rightarrow \mathbb{R}^m$ is locally Lipschitz
continuous on an open set $\mathcal{O}\subseteq \mathbb{R}^n$. Then $\Psi$ is almost everywhere (in the sense of Lebesgue
measure) Fr\'{e}chet-differentiable in $\mathcal{O}$.
\end{theorem}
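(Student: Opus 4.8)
The plan is to reduce the statement to the scalar, globally Lipschitz case and then run the classical three-step argument for Rademacher's theorem. First I would cover the open set $\mathcal{O}$ by countably many balls on each of which $\Psi$ is Lipschitz with some constant $L$; since a countable union of Lebesgue-null sets is null, it suffices to prove differentiability almost everywhere on one such ball. Moreover $\Psi$ is Fr\'{e}chet differentiable at a point exactly when each of its $m$ components is, so I may assume $m=1$ and write $f:=\Psi_i:\mathbb{R}^n\to\mathbb{R}$ with Lipschitz constant $L$.

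Next I would establish that directional derivatives exist almost everywhere. For a fixed unit vector $v$, the one-variable map $t\mapsto f(x+tv)$ is Lipschitz, hence absolutely continuous and, by the one-dimensional theory (every Lipschitz function on $\mathbb{R}$ is differentiable almost everywhere), differentiable for almost every $t$. The set $A_v$ on which the directional derivative $D_vf(x)$ exists is Borel, being described through $\limsup$ and $\liminf$ of the continuous difference quotients, so Fubini's theorem applied along the family of lines parallel to $v$ shows that $A_v$ has full measure. Specializing $v$ to the coordinate directions $e_1,\dots,e_n$ yields a full-measure set on which all partials exist, and there I set $\nabla f(x):=(\partial_1 f(x),\dots,\partial_n f(x))$, noting $\|\nabla f(x)\|_2\le \sqrt{n}\,L$.

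The hard part will be the second step: showing that for each fixed $v$ the directional derivative is linear in the direction, namely $D_vf(x)=\langle\nabla f(x),v\rangle$ almost everywhere. I would prove this weakly, testing against an arbitrary $\phi\in C_c^\infty(\mathbb{R}^n)$. The difference quotients $t^{-1}(f(x+tv)-f(x))$ are bounded by $L$ and converge pointwise almost everywhere to $D_vf$, so dominated convergence together with a translation change of variables transfers the increment from $f$ onto $\phi$ and gives $\int D_vf\,\phi\,dx=-\int f\,D_v\phi\,dx=\sum_i v_i\int \partial_i f\,\phi\,dx=\int\langle\nabla f,v\rangle\,\phi\,dx$. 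Since $\phi$ is arbitrary, $D_vf(x)=\langle\nabla f(x),v\rangle$ for almost every $x$. This is the genuinely $n$-dimensional content of the theorem, where the mere existence of one-sided directional derivatives is promoted to compatibility with a single linear map, and it is the step I expect to be the main obstacle.

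Finally I would upgrade differentiability along countably many directions to full Fr\'{e}chet differentiability. Fix a countable dense set $\{v_k\}$ of unit vectors; intersecting the full-measure sets from the previous step produces a set $E$ of full measure on which $D_{v_k}f(x)=\langle\nabla f(x),v_k\rangle$ for every $k$. For $x\in E$ put $g(v,t):=t^{-1}(f(x+tv)-f(x))-\langle\nabla f(x),v\rangle$; the Lipschitz bound together with $\|\nabla f(x)\|_2\le\sqrt{n}\,L$ makes $v\mapsto g(v,t)$ Lipschitz with a constant independent of $t$, while $g(v_k,t)\to 0$ as $t\to0$ for each $k$. A routine $\varepsilon$-net argument over the unit sphere then forces $\sup_{\|v\|_2=1}|g(v,t)|\to0$, which is precisely Fr\'{e}chet differentiability of $f$ at $x$. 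Reassembling the $m$ components and taking the countable union of the exceptional null sets over the covering balls completes the proof.
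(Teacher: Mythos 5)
The paper does not prove this statement at all: it is quoted as a classical result and attributed to the reference \cite{Rock36}, so there is no in-paper argument to compare against. Your proposal is a correct, essentially complete rendition of the standard proof of Rademacher's theorem (the one found in Evans--Gariepy or Federer): reduction to a scalar, globally Lipschitz function on a ball; existence of directional derivatives almost everywhere via the one-dimensional Lebesgue theorem and Fubini; identification $D_vf=\langle\nabla f,v\rangle$ a.e.\ by testing against $C_c^\infty$ functions and integrating by parts (using dominated convergence on the $L$-bounded difference quotients); and the upgrade to Fr\'{e}chet differentiability via a countable dense set of directions together with the uniform-in-$t$ Lipschitz dependence of the difference quotients on the direction. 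Two minor points you should make explicit if you write this out in full: (i) the measurability of $\partial_i f$ (as an a.e.\ pointwise limit of continuous difference quotients restricted to rational $t$) is what legitimizes both the Fubini step and the weak-derivative identity; and (ii) when you localize to a ball, either extend $f$ to a globally Lipschitz function (McShane/Kirszbraun) or restrict the test functions $\phi$ to be supported in a slightly smaller ball so that $x+tv$ stays in the domain for small $t$. Neither issue is a gap in the idea, only in the bookkeeping.
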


Let $\mathcal{D}_{\Psi}$ be the set of all points where $\Psi$ is Fr\'{e}chet-differentiable and $J\Psi(x)\in \mathbb{R}^{m\times n}$ be the Jacobian of $\Psi$ at $x\in\mathcal{D}_{\Psi}$. For any $x\in\mathcal{O}$, the B-subdifferential of $\Psi$ at $x$ is defined by
$$
\partial_{B} \Psi(x):=\{V \in \mathbb{R}^{m \times n} \mid \exists\{x^{k}\} \subseteq \mathcal{D}_{\Psi} \text { such that } \lim _{k \rightarrow \infty} x^{k}=x \text { and } \lim _{k \rightarrow \infty} J \Psi(x^{k})=V\}.
$$
The Clarke Jacobian of $\Psi$ at $x$ is defined as the convex hull of the B-subdifferential of $\Psi$ at $x$, that is $\partial \Psi(x):=\text{conv}(\partial_{B} \Psi(x))$.
For more detail, one may refer to \cite{Clarke37}.
Let $f$ be specially defined as $f:\mathbb{R}^{n}\rightarrow\mathbb{R}$. The Clarke subdifferential of $f$ at $x$ is defined as
$$
\partial f(x):=\big\{h \in \mathbb{R}^{n} \mid \limsup _{x^{\prime} \rightarrow x, t \downarrow 0} \frac{f(x^{\prime}+t y)-f(x^{\prime})-t h^{\top} y}{t} \geq 0, \forall y \in \mathbb{R}^{n}\big\}.
$$
The regular subdifferential of $f$ at $x$ is defined as
\begin{equation}\label{resubd}
\hat{\partial} f(x):=\big\{h \in \mathbb{R}^{n} \mid \liminf _{x \neq y \rightarrow x} \frac{f(y)-f(x)-h^{\top}(y-x)}{\|y-x\|_2} \geq 0\big\}.
\end{equation}

Let $\Psi: \mathbb{R}^n\rightarrow \mathbb{R}^n$ is a locally Lipschitz
continuous function. The generalized Newton's method for solving $\Psi(x)=0$ was firstly proposed by Kummer \cite{KUMMER}.
Correspondingly, one can refer to \cite[Theorem 3.1]{QI93} and \cite[Theorem 3.4]{qi30} for its local convergence analysis at the case of $\Psi$ being semismooth.
The  most distinctive feature of the inexact semismooth Newton method is to find an approximate solution $d^k$ to
$$
\Psi(x^k)+V^kd=0,
$$
such that
$$
\|\Psi(x^k)+V^kd^k\|\geq\eta\|\Psi(x^k)\|,
$$
where $V^k\in\partial\Psi(x^k)$ and $\eta>0$ is a small constant.
It was shown that when the elements in $\partial\Psi(\bar x)$ are nonsingluar and $\Psi$ is semismooth at solution $\bar x$, the generated sequence $\{x^k\}$ is well-defined and converges to $\bar x$ suplinearly and even quadratic for $k$ sufficiently large.
For more details of inexact semismooth Newton method, one can refer to \cite{FK97,MQ95} and the references therein.

Let $B^{(r)}_{p}$ be a $\ell_{p}$-norm ball with radius $r>0$ and define $\Pi_{B^{(r)}_{p}}(\cdot)$ be an orthogonal projection onto $B^{(r)}_{p}$.
Using this notation, we summarize some proximal mappings of norm functions, which play key rules in the algorithmic construction.
The following Lemma \ref{lemma11} reports some  applications of the Moreau's identity (\ref{moreau}) for some typical norm functions.
These results are known in optimization literature, hence, we omit its proof here.

\begin{lemma}\label{lemma11}
For any $x^*\in \mathbb{R}^n$, it holds that\\
(a) Let $f(x):=\mu\|x\|_1$ with $\mu>0$, then $f^{*}(x^*)=\delta_{B^{(\mu)}_{\infty}}(x^*)$ with ${B^{(\mu)}_{\infty}}(x^*):=\{x^* \ | \ \|x^*\|_{\infty}\leq \mu\}$ and
$$
\operatorname{Prox}_f(x^*)=x^*-\Pi_{B^{(\mu)}_{\infty}}(x^*) \quad \text{with} \quad
(\Pi_{B^{(\mu)}_{\infty}}(x^*))_i=\left\{\begin{array}{ll}
x^*_i, & \text { if }  |x^*_i|\leq\mu,\\
\operatorname{sign}(x^*_i) \mu, & \text { if }  |x^*_i|>\mu,
\end{array}\right.
$$
where $i=1,\ldots,n$ and $sign(\cdot)$ is a sign function of a vector. \\
(b) Let $f(x):=\mu\|x\|_2$ with $\mu>0$, then $f^{*}(x^*)=\delta_{B^{(\mu)}_{2}}(x^*)$ with ${B^{(\mu)}_{2}}(x^*):=\{x^* \ | \ \|x^*\|_{2}\leq \mu\}$ and
$$
\operatorname{Prox}_f(x^*)=x^*-\Pi_{B^{(\mu)}_{2}}(x^*)\quad \text{with} \quad
\Pi_{B^{(\mu)}_{2}}(x^*)=\left\{\begin{array}{ll}
x^*, & \text { if }  \|x^*\|_2\leq\mu,\\
\mu\frac{x^*}{\|x^*\|_2}, & \text { if }  \|x^*\|_2>\mu.
\end{array}\right.
$$
(c) \cite{lin38} Let $f(x):=\mu\|x\|_{\infty}$ with $\mu>0$, then $f^{*}(x^*)=\delta_{B^{(\mu)}_{1}}(x^*)$ with ${B^{(\mu)}_{1}}(x^*):=\{x^* \ | \ \|x^*\|_{1}\leq \mu\}$ and
$$
\operatorname{Prox}_f(x^*)=x^*-\Pi_{B^{(\mu)}_{1}}(x^*) \quad \text{with} \quad
\Pi_{B^{(\mu)}_{1}}(x^*)=\left\{\begin{array}{ll}
x^*, & \text { if }  \|x^*\|_1\leq\mu,\\
\mu P_{x^*} \Pi_{\Delta_{n}}\left(P_{x^*} x^* /\mu\right), & \text { if }  \|x^*\|_1>\mu.
\end{array}\right.
$$
where $P_{x^*}:=\diag(\operatorname{sign}(x^*))$ and $\Pi_{\Delta_{n}}(\cdot)$ denotes the projection onto the simplex $\Delta_{n}:=\{x \in \mathbb{R}^{n} \mid e_{n}^{\top} x=1, x \geq 0\}$, in which $\diag(\cdot)$ denotes a diagonal matrix with elements of a given vector on its diagonal positions.
\end{lemma}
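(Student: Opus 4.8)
The plan is to derive all three formulas from a single mechanism: identify the Fenchel conjugate of each scaled norm as the indicator function of the corresponding dual-norm ball, and then invoke Moreau's identity (\ref{moreau}) to convert the proximal mapping into the complement of an orthogonal projection.

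First I would record the general fact that for any norm $\|\cdot\|$ on $\mathbb{R}^n$ with dual norm $\|\cdot\|_\ast$, the conjugate of $f=\mu\|\cdot\|$ is $f^\ast=\delta_{B}$, where $B=\{x^\ast : \|x^\ast\|_\ast\le\mu\}$. This follows by writing $f^\ast(x^\ast)=\sup_{x}\{\langle x,x^\ast\rangle-\mu\|x\|\}$ and using the generalized Cauchy--Schwarz inequality $\langle x,x^\ast\rangle\le\|x\|\,\|x^\ast\|_\ast$: when $\|x^\ast\|_\ast\le\mu$ the bracket is nonpositive and vanishes at $x=0$, whereas when $\|x^\ast\|_\ast>\mu$ a suitable scaling direction drives the bracket to $+\infty$. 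Specializing to the dual pairs $(\ell_1,\ell_\infty)$, $(\ell_2,\ell_2)$, and $(\ell_\infty,\ell_1)$ produces the three conjugate statements. Since the proximal mapping of an indicator is exactly the Euclidean projection, $\operatorname{Prox}_{\delta_B}=\Pi_B$, Moreau's identity (\ref{moreau}) at $t=1$ gives the common structure
$$
\operatorname{Prox}_f(x^\ast)=x^\ast-\Pi_B(x^\ast),
$$
which is the ``$x^\ast-\Pi$'' form appearing in every part of Lemma \ref{lemma11}.

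It then remains to make the three projections explicit. For part (a) the $\ell_\infty$ ball is the box $[-\mu,\mu]^n$, so the projection decouples coordinatewise into projections onto the intervals $[-\mu,\mu]$, which is precisely the clamping formula. For part (b) the first-order optimality conditions of $\min_{\|y\|_2\le\mu}\|y-x^\ast\|_2^2$ show that interior points are fixed and exterior points are rescaled radially to the boundary, giving $\mu x^\ast/\|x^\ast\|_2$.

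The main obstacle is part (c), the projection onto the $\ell_1$ ball, for which I would follow \cite{lin38}. The key reductions are: (i) if $\|x^\ast\|_1\le\mu$ then $x^\ast$ already lies in the ball and is its own projection; (ii) otherwise, the sign-symmetry of the $\ell_1$ ball forces the projection to preserve the signs of $x^\ast$, so that with $P_{x^\ast}=\diag(\operatorname{sign}(x^\ast))$ the problem reduces to projecting the nonnegative vector $P_{x^\ast}x^\ast=|x^\ast|$; (iii) because an exterior point of a closed convex set projects onto its boundary, this subproblem is projection onto $\{z\ge0:\ e_n^\top z=\mu\}=\mu\Delta_n$, and the scaling relation $\Pi_{\mu\Delta_n}(v)=\mu\,\Pi_{\Delta_n}(v/\mu)$ then yields $\mu\,\Pi_{\Delta_n}(P_{x^\ast}x^\ast/\mu)$; finally restoring the signs through $P_{x^\ast}$ gives the stated expression. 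The delicate points are the sign-preservation in (ii) and the claim that an exterior point necessarily lands on the boundary hyperplane in (iii), both of which rest on the KKT conditions of the constrained projection; the simplex projection $\Pi_{\Delta_n}$ itself is a standard water-filling/thresholding computation that I would simply cite rather than reproduce.
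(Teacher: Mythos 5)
Your derivation is correct and follows exactly the route the paper indicates: it presents Lemma \ref{lemma11} as ``applications of the Moreau's identity (\ref{moreau})'' and omits the proof as standard, citing \cite{lin38} for the $\ell_1$-ball projection in part (c). Your chain (conjugate of a scaled norm equals the indicator of the dual-norm ball, prox of an indicator equals the projection, then Moreau's identity gives $\operatorname{Prox}_f = \mathrm{Id} - \Pi_B$, followed by the explicit box, Euclidean-ball, and sign-reduced simplex projections) is precisely that intended argument, so there is nothing to reconcile.
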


The following Lemma \ref{lemma22} characterizes the Clarke Jacobian of proximal mapping operators regarding to some special norm functions.
The third part of this lemma is recently obtained by Li et al. \cite{lin38}.

\begin{lemma}\label{lemma22}
For any $\vartheta\in\mathbb{R}^n$, it holds that\\
(a) The Clarke Jacobian
 of $\operatorname{Prox}_{f}(\cdot)$ with $f(x):=\mu\|x\|_1$ at $\vartheta$ is given by
$$
\partial \operatorname{Prox}_{f}(\vartheta)
=\bigg\{\diag(\theta) \Big| \theta \in \mathbb{R}^{n}, \ \theta_{i} \in
\left\{\begin{array}{ll}
\{1\}, & \text { if }|\vartheta_{i}|>\mu, \\
{[}0,1{]}, & \text { if } |\vartheta_{i}|=\mu, \quad i=1, \ldots, n \\
\{0\}, & \text { if }|\vartheta_{i}|<\mu,
\end{array}
\right.
\bigg\}.
$$
(b) The  Clarke Jacobian of $\operatorname{Prox}_{f}(\cdot)$ with $f(x):=\mu\|x\|_2$ at $\vartheta$ is given by
$$
\partial \operatorname{Prox}_{f}(\vartheta)=\left\{\begin{array}{ll}
(1-\dfrac{\mu}{\|\vartheta\|_2}) \mathbf{I}_{n}+\mu\dfrac{\vartheta \vartheta^{\top}}{\|\vartheta\|_2^{3}}, & \text { if }\|\vartheta\|_2>\mu,\\[2mm]
\{\kappa\dfrac{\vartheta \vartheta^{\top}}{\mu^2}\mid 0\leq\kappa\leq1\}, & \text { if }\|\vartheta\|_2=\mu,\\[2mm]
\mathbf{0}_{n}, & \text { if }\|\vartheta\|_2<\mu,
\end{array}\right.
$$
where $\mathbf{I}_{n}$ and $\mathbf{0}_n$ are identify matrix and zero matrix in $n$-dimensional Euclidean space, respectively.\\
(c)\cite{lin38} The Clarke Jacobian of $\operatorname{Prox}_{f}(\cdot)$ with $f(x):=\mu\|x\|_\infty$ at $\vartheta$ is given by
$$
\partial \operatorname{Prox}_{f}(\vartheta)=\mathbf{I}_n-H,
$$
where $H\in\partial\Pi_{B^{(\mu)}_{1}}( \vartheta)$ with explicit form
$$
H=
\left\{\begin{array}{ll} P_{\vartheta} \tilde{H} P_{\vartheta}, & \text { if }\|\vartheta\|_1>\mu,\\
\mathbf{I}_n, & \text { if }\|\vartheta\|_1\leq\mu,
\end{array}\right.
$$
where $ \tilde{H}=\text{Diag}(r)-\frac{1}{nnz(r)} r r^{\top} \in \partial \Pi_{\Delta_{n}}(\vartheta)$ with $r \in \mathbb{R}^{n}$ defined as $r_{i}=1$ if $\left(\Pi_{\Delta_{n}}\left(P_{\vartheta} \vartheta / \mu\right)\right)_{i} \neq 0,$ and $r_{i}=0$
otherwise, and $\text{nnz}(\cdot)$ denotes the number of non-zeros elements of a vector.
\end{lemma}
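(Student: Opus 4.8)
The plan is to handle parts (a) and (b) of Lemma \ref{lemma22} by a common three-step recipe, and to inherit part (c) directly from Li et al. \cite{lin38}. First I would invoke the closed-form proximal mappings already recorded in Lemma \ref{lemma11}; second, I would differentiate these formulas classically on the open regions where they are smooth; third, at the lower-dimensional transition set where smoothness fails, I would identify the B-subdifferential as the set of all limits $\lim_{k} J\operatorname{Prox}_f(\vartheta^k)$ taken over differentiable points $\vartheta^k\to\vartheta$, and then recover the Clarke Jacobian as its convex hull via $\partial\operatorname{Prox}_f(\vartheta)=\conv(\partial_B\operatorname{Prox}_f(\vartheta))$.

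For part (a), I would first observe that by Lemma \ref{lemma11}(a) the mapping $\operatorname{Prox}_f$ with $f=\mu\|\cdot\|_1$ is the soft-thresholding operator, which acts coordinatewise through the scalar map $s_\mu(t)=\operatorname{sign}(t)\max\{|t|-\mu,0\}$. Since $s_\mu$ is piecewise affine with slope $1$ on $\{|t|>\mu\}$, slope $0$ on $\{|t|<\mu\}$, and kinks at $t=\pm\mu$, its scalar Clarke subdifferential is $\{1\}$, $\{0\}$, and $[0,1]$, respectively. The only structural fact needed is that for a coordinate-separable locally Lipschitz map the Clarke Jacobian is exactly the set of diagonal matrices whose $i$-th diagonal entry ranges over the scalar Clarke subdifferential of the $i$-th component: this holds because $\operatorname{Prox}_f$ is Fr\'{e}chet-differentiable at $\vartheta$ iff each $s_\mu$ is differentiable at $\vartheta_i$, so $\partial_B\operatorname{Prox}_f(\vartheta)$ consists of diagonal matrices assembled from the scalar B-subdifferentials, and passing to the convex hull commutes with this diagonal assembly. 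This reproduces the stated $\diag(\theta)$ description.

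For part (b), I would use Lemma \ref{lemma11}(b) to write $\operatorname{Prox}_f(\vartheta)=\vartheta-\Pi_{B^{(\mu)}_2}(\vartheta)$, so that $\operatorname{Prox}_f(\vartheta)=\mathbf{0}$ on the open ball $\{\|\vartheta\|_2<\mu\}$ and $\operatorname{Prox}_f(\vartheta)=(1-\mu/\|\vartheta\|_2)\vartheta$ on the open exterior $\{\|\vartheta\|_2>\mu\}$. The map is $C^1$ on each of these regions: the inner Jacobian is $\mathbf{0}_n$, while on the exterior a direct computation using $\nabla\|\vartheta\|_2^{-1}=-\vartheta/\|\vartheta\|_2^3$ gives $(1-\mu/\|\vartheta\|_2)\mathbf{I}_n+\mu\,\vartheta\vartheta^\top/\|\vartheta\|_2^3$, matching the two non-boundary cases. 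On the sphere $\{\|\vartheta\|_2=\mu\}$ I would compute $\partial_B\operatorname{Prox}_f(\vartheta)$ by letting $\vartheta^k\to\vartheta$ through differentiable points: interior sequences contribute only the limit $\mathbf{0}_n$, whereas exterior sequences force the scalar coefficient $1-\mu/\|\vartheta^k\|_2\to 0$ and the rank-one term to converge to $\vartheta\vartheta^\top/\mu^2$, giving the unique exterior limit $\vartheta\vartheta^\top/\mu^2$. Hence $\partial_B\operatorname{Prox}_f(\vartheta)=\{\mathbf{0}_n,\ \vartheta\vartheta^\top/\mu^2\}$, whose convex hull is precisely $\{\kappa\,\vartheta\vartheta^\top/\mu^2:0\le\kappa\le 1\}$, as claimed; part (c) is then quoted from \cite{lin38}.

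The step I expect to be most delicate is the boundary analysis in part (b): one must verify that tangential approaches to the sphere produce no extra limit matrices and that the interior and exterior limits are genuinely the only B-subdifferential elements, so that the convex hull collapses exactly onto the stated rank-one segment rather than enlarging. For part (a), the analogous care lies only in justifying the separable assembly of the Clarke Jacobian from its scalar pieces, which is routine but should be argued explicitly rather than taken for granted.
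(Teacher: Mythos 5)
Your proposal is correct, but note that the paper does not actually prove Lemma \ref{lemma22} at all: parts (a) and (b) are presented as standard facts from the optimization literature, and part (c) is quoted from Li et al.\ \cite{lin38}, exactly as you do. So your contribution is a self-contained derivation where the paper offers none, and the derivation is sound. In (a), the separable-assembly step you flag is indeed the only point needing justification, and it goes through because the set of Fr\'{e}chet-differentiability points of a coordinatewise map is a Cartesian product, so $\partial_B\operatorname{Prox}_f(\vartheta)$ is the diagonal embedding of $\prod_i\partial_B s_\mu(\vartheta_i)$ and $\conv$ commutes with both the product and the linear embedding, yielding $[0,1]$ entries at the kinks. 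In (b), your boundary analysis is complete: the map is differentiable only off the sphere, interior sequences give $\mathbf{0}_n$, the exterior Jacobian extends continuously to the sphere with value $\vartheta\vartheta^{\top}/\mu^{2}$, and mixed sequences can only accumulate at these two matrices, so $\partial_B\operatorname{Prox}_f(\vartheta)=\{\mathbf{0}_n,\vartheta\vartheta^{\top}/\mu^{2}\}$ and the convex hull is exactly the stated rank-one segment. The only cosmetic caveat is that a fully rigorous write-up should also record that points on the sphere itself are non-differentiable (one-sided radial derivatives disagree), so they contribute nothing to the B-subdifferential; you implicitly assume this.
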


It is noteworthy that,  we specifically formalize the element of $\operatorname{Prox}_{f}(\cdot)$ at each case for the purpose of catching the subsequent algorithm's development easily.
\begin{remark}\label{selec}
In our numerical experiments part, we choose
the element $\hat{\Theta}$ in $\partial \operatorname{Prox}_{\mu\|\cdot\|_{1}}(\vartheta)$ at a given point $\vartheta$ as
$$
\hat{\Theta}=\operatorname{diag}(\theta) \text { with } \theta_{i}=\left\{\begin{array}{ll}
1, & \text { if }\left|\vartheta_{i}\right|>\mu,\\[2mm]
0, & \text { if }\left|\vartheta_{i}\right| \leq \mu, \quad i=1, \ldots, n,
\end{array}\right.
$$
choose the element $\hat{\Theta}$ in $\partial \operatorname{Prox}_{\mu\|\cdot\|_{2}}(\vartheta)$ as
$$
\hat{\Theta}=\left\{
\begin{array}{ll}
(1-\dfrac{\mu}{\|\vartheta\|_2}) \mathbf{I}_{n}+\mu\dfrac{\vartheta \vartheta^{\top}}{\|\vartheta\|_2^{3}}, & \text { if }\|\vartheta\|_2>\mu,\\
\mathbf{0}_n, & \text { if }\|\vartheta\|_2\leq\mu,
\end{array}\right.
$$
and choose the element $\hat{\Theta}$ in $\partial \operatorname{Prox}_{\mu\|\cdot\|_{\infty}}(\vartheta)$ as
$$
\hat{\Theta}=\left\{
\begin{array}{ll} \mathbf{I}_n-P_{\vartheta} \tilde{H} P_{\vartheta}, & \text { if }\|\vartheta\|_1>\mu,\\[2mm]
\mathbf{0}_n, & \text { if }\|\vartheta\|_1\leq\mu,
\end{array}\right.
$$
where $\tilde H$ is defined in (c) of Lemma \ref{lemma22}.
\end{remark}

\section{Computational approach}\label{comp}
This section is devoted to constructing a new algorithm to solve the model (\ref{lp}).
Similar to \cite{tang1}, the basic idea of our algorithm also uses a proximal majorization technique to handle the noncovex term  in the objective function of (\ref{lp}).
\subsection{Proximal majorization-minimization algorithm}

The difference between the $\ell_1$-norm and the $\ell_2$-norm causes the main difficulties because it is not only nonsmoothness but also nonconvexity.
To overcome this obstacle, as usual in optimization literature, e.g., \cite{lou15}, we linearize the non-convex term ``$-\|x\|_2$" at a given point $x^k$ to obtain the following convex relaxation minimization problem
$$
  \min_{x\in \mathbb{R}^n}\|Ax-b\|_p+\lambda \|x\|_1-\lambda\beta \big(\|x^k\|_2+\langle v^k, x-x^k \rangle\big),
$$
where $v^k\in \partial \|x^k\|_2$ is a subgradient. Omitting the constant term, it can be simplified as the following minimization problem
\begin{equation}\label{PMM11}
  \min_{x\in \mathbb{R}^n}\|Ax-b\|_p+\lambda \big(\|x\|_1-\beta \langle v^k, x \rangle\big).
\end{equation}
Noting that $\partial \| x\|_2=B_2^{(1)}$ if $ x=0$ and $ x/\| x\|_2$ otherwise. With a given $x^k$, it is reasonable to derive its solution $x^{k+1}$ via
solving the following convex minimization problem iteratively
\begin{equation}\label{dc}
x^{k+1}:=\left\{\begin{array}{ll}
\arg \min\limits _{x \in \mathbb{R}^{n}} \Big\{\|A x-b\|_{p}+\lambda\|x\|_{1}\Big\},  &\text{if} \ x^k=0,\\[3mm]
\arg \min\limits_{x \in \mathbb{R}^{n}}  \Big\{\|A x-b\|_{p}+\lambda\big(\|x\|_{1}-\beta\langle x^k/\|x^k\|_{2}, x \rangle\big)\Big\}, &\text{otherwise}.
\end{array}\right.
\end{equation}
Comparing to (\ref{lp}), both approximated problems in (\ref{dc}) are convex, and hence they can be solved effectively via the alternating direction method of multiplier despite a pair of nonsmooth terms contained. The details of difference of convex functions algorithm will be shown in the Section \ref{num4}.

In this paper, we focus on the employing of the  proximal majorization-minimization semismooth Newton method to solve the nonsmooth convex minimization (\ref{PMM11}).
Based on the idea of Tang et al. \cite{tang1}, we also add a pair of proximal terms to the objective function of (\ref{PMM11}) to get the following proximal majorized minimization problem
\begin{equation}\label{PMMmodel}
\min_{x\in \mathbb{R}^n}\Big\{\tilde{f}^k(x; \sigma^k, \tau^k, v^k, x^k, \tilde{b}^k):=\|Ax-b\|_p+\lambda\big(\|x\|_1-\beta \langle v^k, x \rangle\big)+\frac{\sigma^k}{2}\|x-x^k\|_2^2+\frac{\tau^k}{2}\|Ax-\tilde{b}^k\|_2^2\Big\},
\end{equation}
where $\tilde{b}^k\in \mathbb{R}^m$ is a given data defined later, and $\sigma^k>0$, $\tau^k>0$ are given parameters.
For convenience, we sometimes abbreviate $\tilde{f}(x; \sigma^k, \tau^k, v^k, x^k, Ax^k)$ as $\tilde{f}^k(x)$.
The iterative framework of the proximal majorization-minimization algorithm for solving the $\ell_p$-$\ell_{1-2}$-regularized minimization problem (\ref{lp}) is summarized as follows:

\begin{framed}
{\bf Algorithm: PMM\_$\ell_p$-$\ell_{1-2}$}
\vskip 1.0mm \hrule \vskip 1mm
\begin{itemize}
\item[Step 0.] Input $p:=1$, $2$, or $\infty$; input positive constants $\sigma^{0}$, $\tau^{0}$, and $\rho\in(0,1)$. Let $k:=0$.
\item[Step 1.] Compute an initial point via:
\begin{align}
 x^0:&=\arg\min_{x\in \mathbb{R}^n}\tilde{f}^0(x; \sigma^0, \tau^0, 0, 0, b)\nonumber\\
 &= \arg\min_{x\in \mathbb{R}^n}\|Ax-b\|_p+\lambda \|x\|_1+\frac{\sigma^0}{2}\|x\|_2^2+\frac{\tau^0}{2}\|Ax-b\|_2^2.\label{PMM1}
\end{align}
\item[Step 2.] Main loop:
\begin{itemize}
\item[Step 2.1] Terminate if some stopping criterions satisfied, output $x^k$. Otherwise, continue.

\item[Step 2.2] Set $v^k:=0$ if $x^k=0$. Otherwise set $v^k:=x^k/\|x^k\|_2$.

\item[Step 2.3]Compute
 \begin{align}
 x^{k+1}:&=\arg\min_{x\in \mathbb{R}^n}\Big\{\tilde{f}^k(x; \sigma^k, \tau^k, v^k, x^k, Ax^k)+\langle \delta^k, x-x^k\rangle\Big\}\nonumber\\
 &= \arg\min_{x\in \mathbb{R}^n}\bigg\{
 \begin{array}{c}
 \|Ax-b\|_p+\lambda\big(\|x\|_1-\beta \langle v^k, x \rangle\big)+\frac{\sigma^{k}}{2}\|x-x^k\|_2^2\label{PMM2}\\[2mm]
 +\frac{\tau^{k}}{2}\|Ax-Ax^k\|_2^2+\langle \delta^k, x-x^k\rangle
 \end{array}
 \bigg\}
 \end{align}
via solving its dual problem. That is, applying Algorithm SSN in Subsection \ref{sub33} to find an approximate solution $u^{j+1}$ of (\ref{nabla}) such that the error vector $\delta^k$ satisfies the following accuracy condition
\begin{equation}\label{stopping}
  \|\delta^k\|_2\leq \frac{\sigma^{k}}{4}\|x^{k+1}-x^k\|_2+\frac{\tau^{k}\|Ax^{k+1}-Ax^k\|^2_2}{2\|x^{k+1}-x^k\|_2},
\end{equation}
where
\begin{equation}\label{xk11}
x^{k+1}:=\text{Prox}_{\sigma^{-1}\lambda \| \cdot \|_1}\big(x^k+\sigma^{-1}(\lambda\beta v^k-A^{\top}u^{j+1})\big).
\end{equation}
\end{itemize}
\item[Step 3.]  Update $\sigma^{k+1}:=\rho \sigma^{k}$ and $\tau^{k+1}:=\rho \tau^{k}$. Let $k:=k+1$ and go to Step 2.
\end{itemize}
\end{framed}

It is clear to see that the given $\tilde{b}^k$ in (\ref{PMMmodel}) is actually undersampled data $b$ at the initial step, and it is chosen as $Ax^k$ at the iterative process.
We should note that the subproblem (\ref{PMM1}) can be solved exactly or inexactly via any algorithms because it only used to determine an initial point for the following main loops.
For simplicity, we omit the details on the solving of (\ref{PMM1}) because it fills into the framework of (\ref{PMM2}).
What's more important is to solve the subproblem (\ref{PMM2}) by applying a semismooth Newton method from the aspect of duality, that is, getting an approximate solution $u^{j+1}$ by Algorithm SSN described later and then setting $x^{k+1}$ via (\ref{xk11}).
It should be emphasized that the error vector $\delta^k$ in (\ref{PMM2}) cannot be explicitly predetermined, but here it is used to show that if an inexact solution $x^{k+1}$ from $u^{j+1}$ is computed, there must be a $\delta^k$ to satisfy (\ref{stopping}).
The appearances of the error vector in (\ref{PMM2}) is actually to show that the problem (\ref{PMMmodel}) has been solved approximately.
At last but not at least, the proximal mapping in (\ref{xk11}) is easily implementable because Lemma \ref{lemma11} ensures its explicit solutions for $\ell_1$-norm function.

\subsection{Convergence analysis}
In this section, we establish the convergence result of the sequence generated by Algorithm PMM\_$\ell_p$-$\ell_{1-2}$.
The following proof process can be considered as a generation of \cite[Theorem 15]{tang1} in the sense of using $\ell_p$-norm functions.

\begin{theorem}\label{thelim}
Denote
$$
f_1(\sigma, \tau):= \underset{x \in \mathbb{R}^{n}}{\min}\|Ax-b\|_p+\lambda \|x\|_1+\frac{\sigma}{2}\|x\|_2^2+\frac{\tau}{2}\|Ax-b\|_2^2.
$$
Then we have
$$
\lim _{\sigma, \tau \rightarrow 0}f_1(\sigma, \tau)=\min _{x \in \mathbb{R}^{n}}\|Ax-b\|_{p}+\lambda \|x\|_1.
$$
\end{theorem}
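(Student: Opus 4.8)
The plan is to prove this by a standard squeeze (sandwich) argument on the optimal values, exploiting that the two extra terms $\frac{\sigma}{2}\|x\|_2^2$ and $\frac{\tau}{2}\|Ax-b\|_2^2$ are nonnegative and vanish as $\sigma,\tau\downarrow 0$ at any fixed point. Write $g(x):=\|Ax-b\|_p+\lambda\|x\|_1$ for the unregularized objective and $f^\star:=\min_{x}g(x)$ for its optimal value.

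First I would establish that $f^\star$ is attained, i.e. that $g$ has a minimizer $x^\star$. Since $\lambda>0$, we have $g(x)\ge \lambda\|x\|_1\to+\infty$ as $\|x\|_2\to\infty$, so $g$ is coercive; being also continuous (and convex), it attains its infimum at some $x^\star\in\mathbb{R}^n$. This point $x^\star$, crucially independent of $\sigma$ and $\tau$, is the one I will substitute to obtain the upper bound. (The regularized objective is likewise coercive, so the $\min$ in the definition of $f_1$ is well-defined.)

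Next, the lower bound. For every $\sigma,\tau>0$ and every $x$, nonnegativity of the quadratic terms gives $g(x)\le g(x)+\frac{\sigma}{2}\|x\|_2^2+\frac{\tau}{2}\|Ax-b\|_2^2$; minimizing both sides over $x$ yields $f^\star\le f_1(\sigma,\tau)$. For the upper bound I evaluate the regularized objective at the fixed minimizer $x^\star$, and since a minimum is no larger than any particular function value,
\[
f_1(\sigma,\tau)\le g(x^\star)+\tfrac{\sigma}{2}\|x^\star\|_2^2+\tfrac{\tau}{2}\|Ax^\star-b\|_2^2 = f^\star+\tfrac{\sigma}{2}\|x^\star\|_2^2+\tfrac{\tau}{2}\|Ax^\star-b\|_2^2 .
\]

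Combining the two bounds gives $f^\star\le f_1(\sigma,\tau)\le f^\star+\frac{\sigma}{2}\|x^\star\|_2^2+\frac{\tau}{2}\|Ax^\star-b\|_2^2$ for all $\sigma,\tau>0$. Letting $\sigma,\tau\downarrow 0$, the right-hand perturbation terms tend to $0$ because $x^\star$ is held fixed, so the squeeze theorem forces $\lim_{\sigma,\tau\to0}f_1(\sigma,\tau)=f^\star$, which is exactly the claim. I do not anticipate a genuine obstacle here; the only point needing care is that the upper-bound test point $x^\star$ must be chosen once and kept fixed (so that the bound is uniform and the perturbation genuinely vanishes in the limit), which is precisely why the attainment argument in the first step is required. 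As an aside, the same pointwise domination shows that $f_1$ is monotone nondecreasing in each of $\sigma$ and $\tau$, so the limit could alternatively be identified with $\inf_{\sigma,\tau>0}f_1(\sigma,\tau)$; however, the squeeze argument is cleaner and requires no monotonicity.
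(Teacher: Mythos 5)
Your proof is correct and takes essentially the same two-sided (sandwich) argument as the paper: the lower bound $f^\star\le f_1(\sigma,\tau)$ comes from dropping the nonnegative quadratic terms, and the upper bound from evaluating the regularized objective at a fixed test point. The only cosmetic difference is that you first secure an attained minimizer $x^\star$ via coercivity and substitute it, whereas the paper takes the limit of the upper bound for each fixed $x$ and only then minimizes over $x$ (thereby not needing attainment); both routes yield the same squeeze.
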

\begin{proof}
For any $\sigma$, $\tau>0$ and $x\in \mathbb{R}^n$, we have
$$
f_1(\sigma, \tau) \leq\|Ax-b\|_{p}+\lambda \|x\|_1+\frac{\sigma}{2}\|x\|^{2}_2+\frac{\tau}{2}\|Ax-b\|^{2}_2.
$$
Taking limits on both hand-sides of this inequality as $\sigma, \tau \rightarrow 0$, it gets that
$$
\lim \limits_{\sigma, \tau \rightarrow 0} f_1(\sigma, \tau) \leq\|Ax-b\|_{p}+\lambda \|x\|_1, \ \forall \ x\in\mathbb{R}^n,
$$
which means
$$
\lim _{\sigma, \tau \rightarrow 0} f_1(\sigma, \tau)\leq\min _{x \in \mathbb{R}^{n}} \|Ax-b\|_{p}+\lambda \|x\|_1.
$$
On the other hand, the definition indicates that
$$
f_1(\sigma, \tau)\geq\min _{x \in \mathbb{R}^{n}} \|Ax-b\|_{p}+\lambda \|x\|_1.
$$
Hence, the desired result follows.
\end{proof}

Theorem \ref{thelim} indicates that the minimum value of the function located at the right-hand side of  (\ref{PMM1}) is close to the optimal objective value of (\ref{lp}) if $\beta\equiv0$ and $\sigma^0$, $\tau^0$ are sufficiently small.
Based on this equivalence, we are ready to establish the convergence result of Algorithm PMM\_$\ell_p$-$\ell_{1-2}$ under the following assumption.
\begin{assumption}\label{assu1}
For all $ \lambda>0$, it holds that
$$
f(x):=\|Ax-b\|_{p}+\lambda (\|x\|_1-\beta \|x\|_2)\rightarrow \infty
$$
as $x \rightarrow \infty$, that is to say $f(x)$ is coercive in the sense that the lower level set $\{x\in \mathbb{R}^n\mid f(x)\leq f(x^0)\}$ is bounded if $f(x^0)$ is bounded $\forall x^0\in \mathbb{R}^n$.
\end{assumption}

It should be noted that this assumption holds automatically if $\beta\in[0,1]$ because of the nonnegative of the difference of $(\|x\|_1-\beta \|x\|_2)$ at this case.
Using this notation, the $x$-subproblem at step 2.3 can be rewritten as
$$
x^{k+1}= \arg\min_{x\in \mathbb{R}^n} \tilde{f}^{k}(x)+\langle\delta^{k}, x-x^{k}\rangle.
$$

\begin{lemma}\label{lemma2}
 Let $x^{k+1}$ be an optimal solution of the subproblem in (\ref{PMM2}) such that the condition (\ref{stopping}) holds. Then we have
 $$
\tilde{f}^{k}(x^{k}) \geq \tilde{f}^{k}(x^{k+1})-\frac{\sigma^{ k}}{4}\|x^{k+1}-x^{k}\|^{2}_2-\frac{\tau^{k}}{2}\|A x^{k+1}-A x^{k}\|^{2}_2.
$$
\end{lemma}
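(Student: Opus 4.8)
The plan is to exploit the fact that $x^{k+1}$ is the exact minimizer of the perturbed subproblem $g(x) := \tilde{f}^k(x) + \langle \delta^k, x - x^k\rangle$ appearing in Step 2.3, and then to convert the accuracy condition (\ref{stopping}) into the two quadratic penalty terms on the right-hand side of the claim. First I would record the consequence of optimality: since $x^{k+1}$ minimizes $g$, we have $g(x^{k+1}) \leq g(x^k)$. Writing this out and noting that $g(x^k) = \tilde{f}^k(x^k)$ (the inner product term vanishes at $x = x^k$) yields the rearranged inequality $\tilde{f}^k(x^k) \geq \tilde{f}^k(x^{k+1}) + \langle \delta^k, x^{k+1} - x^k\rangle$. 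This step uses only the definition of a global minimizer, not even convexity; the strong convexity induced by the proximal term $\frac{\sigma^k}{2}\|x - x^k\|_2^2$ could be invoked to sharpen the estimate by an extra nonnegative term $\frac{\sigma^k}{2}\|x^{k+1}-x^k\|_2^2$, but it is not needed for the stated bound.

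Next I would control the cross term $\langle \delta^k, x^{k+1} - x^k\rangle$ from below. By the Cauchy--Schwarz inequality, $\langle \delta^k, x^{k+1} - x^k\rangle \geq -\|\delta^k\|_2 \, \|x^{k+1} - x^k\|_2$. The key observation is that the accuracy condition (\ref{stopping}) was designed precisely so that, after multiplying both of its sides by $\|x^{k+1} - x^k\|_2$, the right-hand side collapses to $\frac{\sigma^k}{4}\|x^{k+1} - x^k\|_2^2 + \frac{\tau^k}{2}\|Ax^{k+1} - Ax^k\|_2^2$. Combining this with the Cauchy--Schwarz bound gives $\langle \delta^k, x^{k+1} - x^k\rangle \geq -\frac{\sigma^k}{4}\|x^{k+1} - x^k\|_2^2 - \frac{\tau^k}{2}\|Ax^{k+1} - Ax^k\|_2^2$, and substituting this into the inequality from the first step produces exactly the claimed estimate.

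The only point requiring care is that the second fraction in (\ref{stopping}) carries $\|x^{k+1} - x^k\|_2$ in its denominator, so the manipulation above is licit only when $x^{k+1} \neq x^k$. I would therefore dispose of the degenerate case $x^{k+1} = x^k$ at the outset: there both $\|x^{k+1} - x^k\|_2$ and $\|Ax^{k+1} - Ax^k\|_2$ vanish, so the asserted inequality reduces to $\tilde{f}^k(x^k) \geq \tilde{f}^k(x^{k+1})$, which holds with equality. I do not expect any genuine obstacle here; the whole argument is a one-line optimality inequality followed by Cauchy--Schwarz and the accuracy condition, and the main thing to get right is simply matching the constants $\sigma^k/4$ and $\tau^k/2$ to those appearing in (\ref{stopping}).
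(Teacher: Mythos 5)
Your proposal is correct and follows essentially the same route as the paper: both reduce the claim to the inequality $\tilde{f}^k(x^k)-\tilde{f}^k(x^{k+1})\geq\langle\delta^k,x^{k+1}-x^k\rangle$, then apply Cauchy--Schwarz and multiply the accuracy condition (\ref{stopping}) through by $\|x^{k+1}-x^k\|_2$. The only (immaterial) differences are that the paper justifies the first inequality via $-\delta^k\in\partial\tilde{f}^k(x^{k+1})$ and the convex subgradient inequality rather than via $g(x^{k+1})\leq g(x^k)$, and that you additionally dispose of the degenerate case $x^{k+1}=x^k$, which the paper leaves implicit.
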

\begin{proof}
Since $\tilde{f}^{k}(\cdot)$ is a convex function and $-\delta^{k} \in \partial \tilde{f}^{k}(x^{k+1})$ from the first-order condition of (\ref{PMM2}), then we have
\begin{align*}
\tilde{f}^{k}(x^{k})-\tilde{f}^{k}(x^{k+1}) &\geq\langle\delta^{k}, x^{k+1}-x^{k}\rangle \geq - \|\delta^{k}\|_2 \|x^{k+1}-x^{k}\|_2\\
&\geq-\frac{\sigma^{k}}{4}\|x^{k+1}-x^{k}\|^{2}_2-\frac{\tau^{ k}}{2}\|A x^{k+1}-A x^{k}\|^{2}_2,
\end{align*}
where the first inequality is from the definition of subdifferentiability and the last inequality is from the condition (\ref{stopping}).
\end{proof}

\begin{lemma}\label{lemma3}
Denote $f(x):=\|Ax-b\|_p+\lambda(\|x\|_1-\beta\|x\|_2)$. The vector $\bar{x} \in int\dom(f)$ is a d-stationary point of (\ref{lp}) if and only if there exist $\sigma$, $\tau \geq 0$ such that
$$
\bar{x} \in \underset{x \in \mathbb{R}^{n}}{\operatorname{argmin}} \ \tilde{f}(x ; \sigma, \tau, \bar{v}, \bar{x}, A \bar{x}),
$$
where $\bar{v}\in \partial \|\bar{x}\|_2$.
\end{lemma}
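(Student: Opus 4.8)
The plan is to recognize the objective as a difference of two convex functions and to reduce both the d-stationarity condition and the $\argmin$ condition to one and the same subgradient inclusion at $\bar x$. Write $g(x):=\|Ax-b\|_p+\lambda\|x\|_1$ and $h(x):=\lambda\beta\|x\|_2$, so that $f=g-h$ with $g$ and $h$ closed proper convex and finite-valued; in particular $\dom f=\mathbb{R}^n$, so $\bar x\in\operatorname{int}\dom f$ is automatic. Since $\tilde f(\,\cdot\,;\sigma,\tau,\bar v,\bar x,A\bar x)=g(x)-\lambda\beta\langle\bar v,x\rangle+\tfrac{\sigma}{2}\|x-\bar x\|_2^2+\tfrac{\tau}{2}\|Ax-A\bar x\|_2^2$ is a sum of the convex $g$, a linear term, and two convex quadratics, it is convex for every $\sigma,\tau\ge0$, and hence $\bar x$ minimizes it if and only if $0\in\partial\tilde f(\bar x)$.

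First I would compute $\partial\tilde f(\bar x)$. The crucial observation is that the gradients of the two proximal quadratics, namely $\sigma(x-\bar x)$ and $\tau A^\top(Ax-A\bar x)$, both vanish at $x=\bar x$ because the proximal centers are $\bar x$ and $A\bar x$. By the Moreau--Rockafellar sum rule (applicable since the quadratic and linear parts are everywhere differentiable) this yields $\partial\tilde f(\bar x)=\partial g(\bar x)-\lambda\beta\bar v$, which does not depend on $\sigma,\tau$. Consequently the existence of some $\sigma,\tau\ge0$ with $\bar x\in\argmin_x\tilde f(\,\cdot\,;\sigma,\tau,\bar v,\bar x,A\bar x)$ is equivalent to the single inclusion $\lambda\beta\bar v\in\partial g(\bar x)$.

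Next I would treat the d-stationarity side via support functions. Because $\bar x$ is interior to $\dom f$, the directional derivative splits as $f'(\bar x;d)=g'(\bar x;d)-h'(\bar x;d)$, and for finite convex functions $g'(\bar x;\cdot)$ and $h'(\bar x;\cdot)$ are exactly the support functions of the nonempty compact convex sets $\partial g(\bar x)$ and $\partial h(\bar x)=\lambda\beta\,\partial\|\bar x\|_2$. Hence $\bar x$ is d-stationary, i.e. $f'(\bar x;d)\ge0$ for all $d$, if and only if the support function of $\partial h(\bar x)$ is dominated pointwise by that of $\partial g(\bar x)$, which by the one-to-one correspondence between nonempty closed convex sets and their support functions is equivalent to the inclusion $\lambda\beta\,\partial\|\bar x\|_2\subseteq\partial g(\bar x)$. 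When $\bar x\ne0$ the $2$-norm is differentiable and $\partial\|\bar x\|_2=\{\bar x/\|\bar x\|_2\}=\{\bar v\}$ is a singleton, so this inclusion reads precisely $\lambda\beta\bar v\in\partial g(\bar x)$, matching the condition obtained on the $\argmin$ side; chaining the two equivalences closes the proof in this case.

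The step I expect to be the main obstacle is the degenerate point $\bar x=0$, where $\|\cdot\|_2$ is nonsmooth and $\partial\|0\|_2=B_2^{(1)}$ is the whole unit ball rather than a single vector. There d-stationarity corresponds to the full set inclusion $\lambda\beta B_2^{(1)}\subseteq\partial g(0)$, so the symbol $\bar v$ in the statement must be read as ranging over $\partial\|\bar x\|_2$ and not as one fixed subgradient; pinning down this quantifier, and verifying that the $\argmin$ characterization is then equivalent to the set inclusion rather than to membership for a single $\bar v$, is the delicate part. Everything else — additivity of directional derivatives at an interior point, the vanishing of the proximal gradients at $\bar x$, and the support-function/inclusion dictionary — is routine and may be invoked from standard convex analysis.
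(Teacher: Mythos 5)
Your argument is correct and, at its core, performs the same reduction as the paper: both sides of the equivalence are collapsed to a single subgradient inclusion for the convex surrogate at $\bar x$. The difference is in the machinery and the level of care. The paper routes the d-stationarity side through the regular subdifferential, asserting that $0\in\hat{\partial}f(\bar x)$ characterizes d-stationarity and then claiming, without computation, that $\hat{\partial}f(\bar x)=\partial\tilde f(\bar x;\sigma,\tau,\bar v,\bar x,A\bar x)$; the $\argmin$ side is then immediate from convexity. You instead work directly with the DC decomposition $f=g-\lambda\beta\|\cdot\|_2$ and the support-function description of directional derivatives, which buys two things the paper's proof leaves implicit: (i) the explicit observation that the proximal terms have vanishing gradient at the center $\bar x$, so the $\argmin$ condition reduces to $\lambda\beta\bar v\in\partial g(\bar x)$ \emph{independently} of $\sigma,\tau$ (this is why ``there exist $\sigma,\tau\ge 0$'' carries no content); and (ii) a transparent identification of d-stationarity with the set inclusion $\lambda\beta\,\partial\|\bar x\|_2\subseteq\partial g(\bar x)$. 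The obstacle you flag at $\bar x=0$ is genuine and is exactly what the paper's phrase ``it is not difficult to see that $\hat{\partial}f(\bar x)=\partial\tilde f^-(\bar x)$'' papers over: for $\beta>0$ one has $\hat{\partial}f(0)=\{h:\lambda\beta B_2^{(1)}+h\subseteq\partial g(0)\}$, which is an erosion of $\partial g(0)$ and not a translate $\partial g(0)-\lambda\beta\bar v$, so the asserted identity fails there; moreover, with the algorithm's choice $\bar v=0$ at the origin, the implication ``$\bar x\in\argmin\Rightarrow$ d-stationary'' can fail (e.g.\ $n=m=1$, $g(x)=|x-b|+\lambda|x|$ with $b>0$ and $\lambda\ge 1$, $\beta=1$). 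So your reading that $\bar v$ must be universally quantified over $\partial\|\bar x\|_2$ at $\bar x=0$ is the correct repair, and on the nondegenerate set $\bar x\neq 0$ (or for $\beta=0$) your proof is complete.
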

\begin{proof}
At the beginning of the proof, we use this abbreviation $\tilde{f}^-(x):=\tilde{f}(x ; \sigma, \tau, \bar{v}, \bar{x}, A \bar{x})$ for convenience.
Noting that $f(\cdot)$ is locally Lipschitz continuous near $\bar{x}$ and directionally differentiable
at $\bar{x}$, then $0 \in \hat{\partial} f(\bar{x})$
is equivalent to $\bar{x}$ being a d-stationary point of $f$, where $\hat{\partial}f(\cdot)$ is the regular subdifferentiability defined in (\ref{resubd}).
It is not difficult to see that $\hat{\partial} f(\bar{x})=\partial\tilde{f}^-(\bar{x})$ from the definition of $f(\cdot)$ and $\tilde{f}^-(\cdot)$, where $\bar{v}\in \partial \|\bar{x}\|_2$. For given $\sigma, \tau$ and $\bar{x},$ the function
$\tilde{f}^-({x})$ is convex, thus $0 \in \partial \tilde{f}^-(\bar{x})$ is equivalent to
$\bar{x} \in \operatorname{argmin}_{x \in \mathbb{R}^{n}}\tilde{f}^-(x)$, which indicates the desired result of this lemma.
\end{proof}

The following theorem shows that the sequence $\{x^k\}$ generated by Algorithm PMM\_$\ell_p$-$\ell_{1-2}$ is convergent.
\begin{theorem}\label{the2}
Suppose that the function $f(x)$ in (\ref{lp}) is bounded and the Assumption \ref{assu1} holds.
 Assume that $\{\sigma^{k}\}$ and $\{\tau^{k}\}$ be the sequences converge to zero. Let $\{x^{k}\}$ be the sequence generated by Algorithm PMM\_$\ell_p$-$\ell_{1-2}$, we have \\
(a) $f(x^k)-f(x^{k+1})\geq \frac{\sigma^{k}}{4}\|x^{k+1}-x^k\|_2^2\geq 0$.\\
(b) $\{x^k\}$ is bounded if $f(x^0)$ is bounded.\\
(c) $\|x^{k+1}-x^k\|_2\rightarrow 0$ as $k\rightarrow \infty$.\\
(d) Any limit point $x^{\infty}$ of the sequence $\{x^{k}\}$  is a d-stationary point of (\ref{lp}).
\end{theorem}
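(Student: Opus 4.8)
The plan is to make the descent estimate (a) the engine that drives everything, and to obtain it by squeezing Lemma \ref{lemma2} between two elementary facts about the surrogate $\tilde f^k$. First I would note that, because Step~2.2 sets $v^k\in\partial\|x^k\|_2$, Euler's identity for the norm gives $\langle v^k,x^k\rangle=\|x^k\|_2$, so both proximal terms vanish at $x=x^k$ and hence $\tilde f^k(x^k)=f(x^k)$. Second, convexity of $\|\cdot\|_2$ yields the subgradient bound $\langle v^k,x^{k+1}\rangle\le\|x^{k+1}\|_2$, so that (using $\beta\ge 0$) $\tilde f^k(x^{k+1})\ge f(x^{k+1})+\frac{\sigma^k}{2}\|x^{k+1}-x^k\|_2^2+\frac{\tau^k}{2}\|Ax^{k+1}-Ax^k\|_2^2$. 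Substituting both into Lemma \ref{lemma2}, the $\tau^k$-terms cancel exactly and the $\sigma^k$-terms combine to leave $f(x^k)-f(x^{k+1})\ge\frac{\sigma^k}{4}\|x^{k+1}-x^k\|_2^2\ge 0$, which is (a). Part (b) is then immediate: (a) shows $\{f(x^k)\}$ is nonincreasing, so every iterate lies in $\{x:f(x)\le f(x^0)\}$, a bounded set by the coercivity in Assumption \ref{assu1}.

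For (c), since $f$ is bounded below and $\{f(x^k)\}$ is nonincreasing, it converges, so the gaps $f(x^k)-f(x^{k+1})\to 0$ and (a) forces $\sigma^k\|x^{k+1}-x^k\|_2^2\to 0$. I expect the passage from this \emph{weighted} statement to the unweighted conclusion $\|x^{k+1}-x^k\|_2\to 0$ to be the main obstacle, precisely because Step~3 drives $\sigma^k=\rho^k\sigma^0\to 0$: dividing the vanishing gap by the vanishing weight $\sigma^k$ is not licit, and the telescoped series $\sum_k\sigma^k\|x^{k+1}-x^k\|_2^2<\infty$ is vacuous once $\sum_k\sigma^k<\infty$ already holds by boundedness. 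I would close this either (i) under the mild strengthening $\inf_k\sigma^k>0$, which makes (c) immediate, or (ii) by a subsequential contradiction argument: if $\|x^{k_j+1}-x^{k_j}\|_2\ge\varepsilon$ along a subsequence, extract $x^{k_j}\to\hat x$ and $x^{k_j+1}\to\tilde x$ with $\tilde x\ne\hat x$ and try to exploit the strong convexity of the subproblem together with the surrogate $x\mapsto\|Ax-b\|_p+\lambda(\|x\|_1-\beta\langle v^k,x\rangle)$, which majorizes $f$ and touches it at $x^k$, to contradict the convergence of $\{f(x^k)\}$. I flag this as the genuinely delicate point, since with $\sigma^k\to0$ the available decrease is only $O(\sigma^{k_j})$.

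For (d) I would take a limit point $x^\infty=\lim_j x^{k_j}$, which exists by (b), and pass to the limit in the first-order condition $-\delta^k\in\partial\tilde f^k(x^{k+1})$, i.e. $\lambda\beta v^{k}-\sigma^{k}(x^{k+1}-x^{k})-\tau^{k}A^\top(Ax^{k+1}-Ax^{k})-\delta^{k}\in\partial g(x^{k+1})$ with $g(x):=\|Ax-b\|_p+\lambda\|x\|_1$. By (c) I get $x^{k_j+1}\to x^\infty$; boundedness of $\{x^k\}$ with $\sigma^k,\tau^k\to0$ annihilates the two proximal terms, while the accuracy rule (\ref{stopping}) together with $\|Ax^{k+1}-Ax^k\|_2\le\|A\|\,\|x^{k+1}-x^k\|_2$ gives $\|\delta^{k}\|_2\le(\frac{\sigma^{k}}{4}+\frac{\tau^{k}\|A\|^2}{2})\|x^{k+1}-x^k\|_2\to0$. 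Passing to a further subsequence so that $v^{k_j}\to\bar v$ with $\|\bar v\|_2\le1$, I check that $\bar v\in\partial\|x^\infty\|_2$: if $x^\infty\ne0$ continuity of $x\mapsto x/\|x\|_2$ forces $\bar v=x^\infty/\|x^\infty\|_2$, and if $x^\infty=0$ the bound $\|\bar v\|_2\le1$ already places $\bar v\in\partial\|x^\infty\|_2=B_2^{(1)}$. Outer semicontinuity of the convex subdifferential $\partial g$ then gives $\lambda\beta\bar v\in\partial g(x^\infty)$, i.e. $0\in\partial\tilde f(x^\infty;0,0,\bar v,x^\infty,Ax^\infty)$; since that function is convex, $x^\infty$ minimizes it, and Lemma \ref{lemma3} (applied with $\sigma=\tau=0$ and $\dom f=\mathbb{R}^n$) certifies that $x^\infty$ is a d-stationary point of (\ref{lp}).
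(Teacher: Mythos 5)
Your parts (a) and (b) reproduce the paper's argument exactly: the paper likewise combines $\tilde f^k(x^k)=f(x^k)$ with Lemma \ref{lemma2} and the subgradient inequality $\langle v^k,x^{k+1}\rangle\le\|x^{k+1}\|_2$ (``the convexity of $\|x\|_2$'') to cancel the $\tau^k$-term and halve the $\sigma^k$-term, and then invokes Assumption \ref{assu1} for boundedness. For (d) you take a slightly different but equally valid route: the paper passes to the limit in the inequality $\tilde f^k(x)\ge\tilde f^k(x^{k+1})-\|\delta^k\|_2\|x^{k+1}-x\|_2$ for all $x$ to conclude $x^\infty\in\argmin_x\tilde f(x;\sigma^\infty,\tau^\infty,v^\infty,x^\infty,Ax^\infty)$ and then applies Lemma \ref{lemma3}, whereas you pass to the limit in the first-order condition using outer semicontinuity of $\partial g$; your version is in fact more careful, since you explicitly verify $\bar v\in\partial\|x^\infty\|_2$ including the case $x^\infty=0$, which the paper silently assumes.

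The point you flag in (c) is not a defect of your write-up but a genuine gap in the paper itself. The paper's entire proof of (c) is the sentence that convergence of $\{f(x^k)\}$ together with assertion (a) ``indicates that $\|x^{k+1}-x^k\|_2$ converges to zero,'' which is precisely the non sequitur you describe: (a) only yields $\sigma^k\|x^{k+1}-x^k\|_2^2\to 0$, and since Step~3 sets $\sigma^{k}=\rho^k\sigma^0\to 0$ geometrically while $\|x^{k+1}-x^k\|_2$ is merely bounded by (b), no conclusion about $\|x^{k+1}-x^k\|_2$ follows. (Indeed the theorem's hypothesis explicitly assumes $\sigma^k\to0$, so your fix (i), requiring $\inf_k\sigma^k>0$, contradicts the stated setting, and fix (ii) would need to be carried out in detail; note also that (d) depends on (c) through the claim $x^{k_j+1}\to x^\infty$, so the gap propagates.) You have not missed an idea that the paper supplies; the paper supplies no idea here, and a correct proof of (c) as stated would require an additional argument of the kind you sketch.
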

\begin{proof}
(a) From step 2.3 in  Algorithm PMM\_$\ell_p$-$\ell_{1-2}$, it is easily known that $f(x^{k})=\tilde{f}^{k}(x^{k})$.
Combing with Lemma \ref{lemma2}, we have
\begin{align*}
f(x^{k}) =&\tilde{f}^{k}(x^{k}) \geq \tilde{f}^{k}(x^{k+1})-\frac{\sigma^{ k}}{4}\|x^{k+1}-x^{k}\|^{2}_2-\frac{\tau^{ k}}{2}\|A x^{k+1}-A x^{k}\|^{2}_2 \\
 \geq & f(x^{k+1})+\frac{\sigma^{ k}}{4}\|x^{k+1}-x^{k}\|^{2}_2,
\end{align*}
where the last inequality is from  the convexity of $\|x\|_2$.\\
(b) The assertion (a) indicates that $\{f(x^k)\}$ is monotonically decreasing,  which in turn shows that
$\{x^{k}\} \subseteq\{x \in \mathbb{R}^{n}: f(x) \leq f(x^{0})\}$. Hence, the sequence $\{x^k\}$ is bounded from Assumption \ref{assu1}.\\
(c) Since $f(x)$ is bounded and $\{f(x^k)\}$ is monotonically decreasing, the sequence $\{f(x^k)\}$ is convergent.  Then from the assertion (a), it indicates that the sequence  $\{\|x^{k+1}-x^k\|_2\}$ converges to zero.\\
(d) Let $x^{\infty}$ be a limit point of the subsequence $\{ x^{k}\}_{k \in \mathcal{K}} .$ We can easily prove that $\{x^{k+1}\}_{k \in \mathcal{K}}$ also converges to $x^{\infty} .$ It follows from the definition of $x^{k+1}$ that
$$
\tilde{f}^{k}(x) \geq \tilde{f}^{k}(x^{k+1})+\langle\delta^{k}, x^{k+1}-x\rangle \geq \tilde{f}^{k}(x^{k+1})-\|\delta^{k}\|_2\|x^{k+1}-x\|_2, \quad \forall x \in \mathbb{R}^{m}.
$$
Letting $k(\in \mathcal{K}) \rightarrow \infty,$ we obtain that
$$
x^{\infty} \in \underset{x \in \mathbb{R}^{n}}{\operatorname{argmin}} \ \tilde{f}(x ; \sigma^{\infty}, \tau^{ \infty}, v^{\infty}, x^{\infty},  A x^{\infty}),
$$
where $v^{\infty} \in \partial\|x^{\infty}\|_2,$ $\sigma^{\infty}=\lim _{k \rightarrow \infty} \sigma^{ k} \geq 0$ and $\tau^{\infty}=\lim _{k \rightarrow \infty} \tau^{k} \geq 0 $.
The desired result follows from Lemma
\ref{lemma3}.
\end{proof}

\subsection{Solving subproblem}\label{sub33}

The main computing burden in Algorithm PMM\_$\ell_p$-$\ell_{1-2}$  lies in solving the subproblems (\ref{PMM1}) and (\ref{PMM2}).
Fortunately, we will show that each subproblem can be solved effectively by the using of the semismooth Newton method which has been widely shown to have
faster local convergence rate.
Noting that (\ref{PMM1}) and (\ref{PMM2}) are actually in the framework of (\ref{PMMmodel}), hence we only focus on the algorithm's construction for problem (\ref{PMMmodel})

For the purpose the algorithm's design, we let $y:=Ax-b$, then  (\ref{PMMmodel}) is equivalent to
\begin{equation}\label{PMMmodel2}
\begin{array}{ll}
\min\limits_{x,y} & \|y\|_p+\lambda \|x\|_1-\lambda\beta \langle v^k, x \rangle+\frac{\sigma}{2}\|x-x^k\|_2^2+\frac{\tau}{2}\|y+b-\tilde{b}^k\|_2^2  \\
\text{s.t.} & Ax-y=b,
\end{array}
\end{equation}
where $\tilde{b}^k$ is actually $Ax^k$ at (\ref{PMM2}).
The Lagrangian function associated with problem (\ref{PMMmodel2})  is given by
\begin{align*}
\mathcal{L}(x,y;u)&= \|y\|_p+\lambda \|x\|_1-\lambda\beta \langle v^k, x \rangle+\frac{\sigma}{2}\|x-x^k\|_2^2+\frac{\tau}{2}\|y+b-\tilde{b}^k\|_2^2+\langle u,Ax-y-b\rangle,
\end{align*}
where $u\in\mathbb{R}^{m}$ is  a multiplier associated with the constraint.
The Lagrangian dual function $D(u)$ is defined as the minimum value of the Lagrangian function over $(x,y)$, that is
\begin{align*}
   D(u)
  =& \inf_{x,y} \mathcal{L}(x,y;u)\\[1mm]
  =& \min_{x} \Big\{\lambda \|x\|_1-\lambda\beta \langle v^k, x \rangle+\frac{\sigma}{2}\|x-x^k\|_2^2+\langle u,Ax\rangle\Big\}+\min_{y} \Big\{\|y\|_{p}+\frac{\tau}{2}\|y+b-\tilde{b}^k\|_2^2-\langle u,y\rangle\Big\}-\langle u,b\rangle,
\end{align*}
From the first-order optimality conditions of the $x$- and $y$-subproblems, it yields that its minimizers can be expressed explicitly as
\begin{align}
\bar{x}=\arg\min_{x} \big\{\lambda \|x\|_1-\lambda\beta \langle v^k, x \rangle+\frac{\sigma}{2}\|x-x^k\|_2^2+\langle u,Ax\rangle\big\}
=\text{Prox}_{\sigma^{-1}\lambda \| \cdot \|_1}\big(x^k+\sigma^{-1}(\lambda\beta v^k-A^{\top}u)\big),\label{xbar}
\end{align}
and
\begin{align}
\bar{y}=\arg\min_{y} \big\{\|y\|_{p}+\frac{\tau}{2}\|y+b-\tilde{b}^k\|_2^2-\langle u,y\rangle\big\}
=\text{Prox}_{\tau^{-1} \| \cdot \|_{p}}\big( \tau^{-1}u-b+\tilde{b}^k \big).\label{ybar}
\end{align}
The Lagrangian dual problem of (\ref{PMMmodel2}) is the maximizing this dual function $D(u)$ subject to some constraints, which can  be equivalently formulated as the following optimization problem by the using of the Moreau's identity theorem  (\ref{moreau}) and the equalities (\ref{xbar}) and (\ref{ybar}), that is
\begin{equation}\label{dual}
\min_{u} \Theta(u):=\left\{
\begin{array}{lll}
&\langle u, b\rangle +\dfrac{\tau}{2}\Big\|\tau^{-1}u-b+\tilde{b}^k\Big\|_2^2-\Big\|\text{Prox}_{\tau^{-1} \| \cdot \|_{p}}\big( \tau^{-1}u-b+\tilde{b}^k\big)\Big\|_p\\[2mm]
&-\dfrac{1}{2\tau}\Big\|\text{Prox}_{\tau \| \cdot \|^*_{p}}\big( u-\tau(b-\tilde{b}^k)\big)\Big\|_2^2+\dfrac{\sigma}{2}\Big\|x^k+\sigma^{-1}(\lambda\beta v^k-
A^{\top}u)\Big\|_2^2\\[2mm]
&-\lambda\Big\|\text{Prox}_{\sigma^{-1}\lambda \| \cdot \|_1}\big(x^k+\sigma^{-1}(\lambda\beta v^k-A^{\top}u)\big)\Big\|_1
-\dfrac{1}{2\sigma}\Big\|\text{Prox}_{\sigma(\lambda \| \cdot \|_1)^*}\big(\sigma x^k+\lambda\beta v^k-A^{\top}u\big)\Big\|^2_2
\end{array}
\right\}.
\end{equation}

From the equalities of (\ref{xbar}) and (\ref{ybar}), we can clearly observe that if the value of dual variable $u$ is known, the values of $x$ and $y$ can be obtained in a compact form.
It is from \cite[Theorem 31.5]{Rock35} that $\Theta(u)$ is first-order continuously
differentiable with gradient
$$
\nabla \Theta(u)=b+\text{Prox}_{\tau^{-1} \| \cdot \|_{p}}\Big( \tilde{b}^k-b+\tau^{-1}u\Big)-A\text{Prox}_{\sigma^{-1}\lambda \| \cdot \|_1}\Big(x^k+\sigma^{-1}(\lambda\beta v^k-A^{\top}u)\Big).
$$
Since $\Theta(u)$ is strongly convex, then its minimizer $\bar{u}$  can be obtained by the following first-order optimality condition
\begin{equation}\label{nabla}
  \nabla \Theta(u)=0.
\end{equation}
It is well known in optimization literature that $\text{Prox}_{\| \cdot \|_p}(\cdot)$ with $p=1$, $2$ and $\infty$ is strongly semismooth,  so do $\nabla \Theta(u)$,
which means that the Hessian of $\Theta(u)$ is unavailable. Fortunately, due to the special structures of the proximal mappings involved in $\nabla\Theta(u)$, the generalized Hessian can be obtained explicitly from Lemma \ref{lemma22}. Therefore, the semismooth Newton method of \cite{FK97,MQ95} can be employed.

We now focus on the applying of an efficient semismooth Newton  method to find an approximate solution of (\ref{nabla}).
Noting that the proximal mapping operators $\text{Prox}_{\tau^{-1} \| \cdot \|_{p}}(\cdot)$ and $\text{Prox}_{\sigma^{-1}\lambda \| \cdot \|_1}(\cdot)$ are Lipschitz continuous, then the following multifunction is well defined:
\begin{equation}\label{partt}
\tilde{\partial}^2\Theta(u):=\sigma^{-1}A\partial\text{Prox}_{\sigma^{-1}\lambda \| \cdot \|_1}\Big(x^k+\sigma^{-1}(\lambda\beta v^k-A^{\top}u)\Big)A^{\top}+\tau^{-1}\partial\text{Prox}_{\tau^{-1} \| \cdot \|_{p}}\Big( \tilde{b}^k-b+\tau^{-1}u\Big),
\end{equation}
where $\tilde{\partial}^2$ is the so-called generalized Hessian of $\Theta(\cdot)$, and $\partial(\cdot)$ is a Clarke Jacobian \cite{Clarke37} reviewed in Section \ref{preli}. Choose
$$
U\in \partial\text{Prox}_{\sigma^{-1}\lambda \| \cdot \|_1}\big(x^k+\sigma^{-1}(\lambda\beta v^k-A^{\top}u)\big) \quad \text{and} \quad V\in\partial\text{Prox}_{\tau^{-1} \| \cdot \|_{p}}\big( \tilde{b}^k-b+\tau^{-1}u\big),
$$
and let $H:=\sigma^{-1}AUA^{\top}+\tau^{-1}V$, then we have $H\in\tilde{\partial}^2\Theta(u)$.

In light of the above analysis, we list the framework of the semismooth Newton method to solve (\ref{PMMmodel}) as follows.
For details on the semismooth Newton method, one can refer to the excellent paper \cite{li38} and the references therein.

\begin{framed}
\noindent
{\bf Algorithm SSN: Semismooth Newton method for (\ref{nabla})}
\vskip 1.0mm \hrule \vskip 1mm
\begin{itemize}
\item[{Step 0.}] Given $\sigma>0$, $\tau>0$, $x^k\in\mathbb{R}^n$, $v^k\in \partial \|x^k\|_2$, and $\tilde{b}^k\in\mathbb{R}^m$, $p:=1$, $2$, or $\infty$;
input $\mu\in (0,1/2)$, $\bar{\eta}\in(0,1)$,
$\varrho\in(0,1]$, $\delta\in(0,1)$ and $u^0\in\mathbb{R}^m$. For $j=0,1,\ldots$, do the following steps iteratively:
\item[{Step 1.}] Select an element  $U^j\in \partial\text{Prox}_{\sigma^{-1}\lambda \| \cdot \|_1}\big(x^k+\sigma^{-1}(\lambda\beta v^k-A^{\top}u^j)\big)$ and $V^j\in\partial\text{Prox}_{\tau^{-1} \| \cdot \|_{p}}\big( \tilde{b}^k-b+$
 $\tau^{-1}u^j\big)$, and set $H^j:=\sigma^{-1}AU^j A^{\top}+\tau^{-1}V^j$. Employ a numerical method (e.g., conjugate gradient  method) to compute an approximate solution $\Delta u^j$ to the linear system
$$
  H^j\Delta u^j+\nabla \Theta (u^j)=0
$$
such that
\begin{equation*}
  \| H^j\Delta u^j+\nabla \Theta (u^j)\|_2\leq \min \{\bar{\eta}, \|\nabla \Theta (u^j)\|_2^{1+\varrho}\}.
\end{equation*}
\item[{Step 2.}] Find $\alpha_j:=\delta^{t_j}$, where $t_j$ is the first nonnegative integer $t$ such that
$$
\Theta (u^j+\delta^{t}\Delta u^j)\leq \Theta (u^j)+\mu \delta^{t}\langle \nabla \Theta (u^j), \Delta u^j\rangle.
$$
\item[{Step 3.}]Set $u^{j+1}:=u^j+\alpha_j\Delta u^j$.
\end{itemize}
\vspace{-.3cm}
\end{framed}

We now give a couple of remarks to make the algorithm easier to follow.
Firstly, the iterative process should be terminated until $\nabla\Theta(u^{j+1})$ satisfies a prescribed stopping criterion, e.g., $\|\nabla\Theta(u^{j+1})\|_2\leq\epsilon$ with a tolerance $\epsilon$. Then the $x^{k+1}$ getting from (\ref{xk11}) by using $u^{j+1}$ ensures that there must be an error vector $\delta^k$ to satisfy (\ref{stopping}), which indicates that $x^{k+1}$ is actually an inexact solution of (\ref{PMMmodel}).
Secondly, it will be shown from the following Theorem \ref{the33} that, under Assumptions \ref{assum32} and \ref{assum33}, the generalized Hessian $H\in\tilde{\partial}^2\Theta(\bar u)$ chosen according to Remark \ref{selec} is symmetric and positive definite, which means that the approximate solution  $\Delta u^j$ is well-defined.

To end this section, we report the convergence result of the Algorithm SSN with a sketched proof draw from \cite{li38}.
For this purpose, we need a pair of additional conditions listed below:

\begin{assumption}\label{assum32}
Suppose that the unique optimal solution $\bar{x}$ of the problem (\ref{PMMmodel}) satisfies $A\bar x-b\neq 0$, and specifically
$(A\bar{x}-b)_i\neq 0$ for each $i\in\{1,2, \ldots,n\}$ when $p=1$.
\end{assumption}

\begin{assumption}\label{assum33}
Let $\bar{u}$ be the unique solution of the problem (\ref{dual}), and denote $\hat{u}:= \tau^{-1}\bar{u}-b+\tilde{b}^k$.
Assume that the maximum eigenvalue $\lambda_{\max}(H)$ of the matrix $H\in\partial\Pi_{B^{(1/\tau)}_{1}}( \hat{u})$ with explicit form $H=P_{\hat{u}} \tilde{H} P_{\hat{u}}$, satisfies that $\lambda_{\max}(H)<1$.
\end{assumption}

{\color{red}It} should be noted that the second assumption can be met at some special cases. For example, if $\|\hat{u}\|_1>1/\tau$ and that there
only one entry of the projection of $P_{\hat u}\hat u\tau$ onto the simplex  $\Delta_n$  is not zero, i.e., $(\Delta_n(P_{\hat u}\hat u\tau))_i\neq 0$ with only one index $i$.
At this case, we have $r_i=1$ and $r_j=0$ for $j\neq i$. Using the notation $\tilde{H}=\text{Diag}(r)-rr^{\top}/\text{nnz(r)}$ and noting the definition of $P_{\hat{u}}$ in Lemma \ref{lemma22}, we can get that $H={\bf 0}_n$, and hence this assumption is satisfied.
Taking another example, if $(\Delta_n(P_{\hat u}\hat u\tau))_i\neq 0$ and $(\Delta_n(P_{\hat u}\hat u\tau))_j\neq 0$ with $\hat{u}_i\neq 0$ and $\hat{u}_j=0$.
From Lemma \ref{lemma22}(c) and the relation $\partial \text{Prox}_{\tau^{-1} \| \cdot \|_{\infty}}(\hat{u})={\bf I_n}-H$, it is not hard to deduce that
$H=\diag(0,\ldots,1/2,\ldots,0)$, i.e., a diagonal matrix with entry $1/2$ at the $k$-position and $0$ elsewhere.
In this instance, we also have $\lambda_{\max}(H)<1$.

\begin{theorem}\label{the33}
Suppose that the Assumptions \ref{assum32} and \ref{assum33} hold. Then the sequence $\{u^j\}$ generated by Algorithm
SSN converges to the unique optimal solution $\bar{u}$ of the problem (\ref{dual}) and satisfies
$$
\|u^{j+1}-\bar{u}\|_2= \O(\|u^j-\bar{u}\|_2)^{1+\varrho}
$$
with $\varrho\in(0,1]$.
\end{theorem}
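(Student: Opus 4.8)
The plan is to reduce the statement to the classical local convergence theory for the inexact, globalized semismooth Newton method recalled in Section~\ref{preli}, whose two hypotheses are: (i) $\nabla\Theta$ is (strongly) semismooth at $\bar u$, and (ii) every element of the generalized Hessian $\tilde{\partial}^2\Theta(\bar u)$ is nonsingular. Since $\Theta$ is strongly convex it admits a unique minimizer $\bar u$ with $\nabla\Theta(\bar u)=0$, and because $\text{Prox}_{\tau^{-1}\|\cdot\|_p}$ and $\text{Prox}_{\sigma^{-1}\lambda\|\cdot\|_1}$ are strongly semismooth for $p\in\{1,2,\infty\}$, the gradient $\nabla\Theta$ inherits strong semismoothness, so (i) holds with semismoothness order $1$. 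Everything then hinges on verifying (ii), which is precisely where Assumptions \ref{assum32} and \ref{assum33} enter.

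First I would fix an element $H=\sigma^{-1}AUA^{\top}+\tau^{-1}V\in\tilde{\partial}^2\Theta(\bar u)$ with $U$ and $V$ selected as in Remark~\ref{selec}, and record that by Lemma~\ref{lemma22}(a) the matrix $U=\diag(\theta)$ has entries in $\{0,1\}$, hence $U\succeq 0$ and the block $\sigma^{-1}AUA^{\top}$ is positive semidefinite but in general only that, since $A$ need not have full row rank and $U$ may be singular. Consequently the positive definiteness of $H$ must be supplied entirely by $\tau^{-1}V$, and this is exactly what the assumptions secure. Writing $\hat u:=\tau^{-1}\bar u-b+\tilde b^k$ and using $\bar y=\text{Prox}_{\tau^{-1}\|\cdot\|_p}(\hat u)=A\bar x-b$ from (\ref{xbar})--(\ref{ybar}), I would argue case by case. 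For $p=1$, Assumption~\ref{assum32} gives $(A\bar x-b)_i\neq 0$ for every $i$, i.e. $\bar y_i\neq 0$; since the $\ell_1$-prox is the soft-threshold $\text{Prox}_{\tau^{-1}\|\cdot\|_1}(\hat u)_i=\hat u_i-\Pi_{B^{(1/\tau)}_{\infty}}(\hat u)_i$, this forces $|\hat u_i|>1/\tau$, whence Lemma~\ref{lemma22}(a) and Remark~\ref{selec} give $V=\mathbf{I}_m$, so $\tau^{-1}V\succ 0$. For $p=2$, Assumption~\ref{assum32} gives $A\bar x-b\neq 0$, hence $\|\hat u\|_2>1/\tau$, and Lemma~\ref{lemma22}(b) yields $V=(1-\tfrac{1}{\tau\|\hat u\|_2})\mathbf{I}_m+\tfrac{1}{\tau}\tfrac{\hat u\hat u^{\top}}{\|\hat u\|_2^{3}}$ with $1-\tfrac{1}{\tau\|\hat u\|_2}>0$, a positive-definite matrix. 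For $p=\infty$, Assumption~\ref{assum33} provides $H_0:=P_{\hat u}\tilde H P_{\hat u}$ with $\lambda_{\max}(H_0)<1$; since $\tilde H=\diag(r)-\tfrac{1}{\text{nnz}(r)}rr^{\top}\succeq 0$ (a Cauchy--Schwarz estimate on the $0/1$ vector $r$, using $r_i^2=r_i$) and congruence preserves semidefiniteness, $H_0\succeq 0$, so $V=\mathbf{I}_m-H_0$ has eigenvalues at least $1-\lambda_{\max}(H_0)>0$, i.e. $V\succ 0$. In all three cases $\tau^{-1}V\succ 0$, hence $H\succ 0$: each selected element of $\tilde{\partial}^2\Theta(\bar u)$ is symmetric positive definite, and in particular nonsingular, establishing (ii).

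Finally I would assemble the conclusion. Positive definiteness of $H^j$ for $u^j$ near $\bar u$ makes $\Delta u^j$ a descent direction, so $\langle\nabla\Theta(u^j),\Delta u^j\rangle<0$ and the Armijo rule of Step~2 is well defined; the monotone decrease of the strongly convex, coercive $\Theta$ then forces $u^j\to\bar u$, and a standard argument shows the unit step $\alpha_j=1$ is eventually accepted, so the iteration transitions to the pure inexact Newton phase near $\bar u$. There the residual bound $\|H^j\Delta u^j+\nabla\Theta(u^j)\|_2\le\|\nabla\Theta(u^j)\|_2^{1+\varrho}$, together with strong (order-$1$) semismoothness of $\nabla\Theta$ and nonsingularity of $\tilde{\partial}^2\Theta(\bar u)$, yields via \cite{QI93,qi30} the local rate $\|u^{j+1}-\bar u\|_2=\O(\|u^j-\bar u\|_2^{1+\varrho})$, the exponent being $1+\min\{1,\varrho\}=1+\varrho$. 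The main obstacle is step (ii): because the $AUA^{\top}$ block is only semidefinite, the entire burden of invertibility falls on $\tau^{-1}V$, and verifying that the specific Clarke-Jacobian element chosen in Remark~\ref{selec} is positive definite---$V=\mathbf{I}_m$ for $p=1$, a rank-one-perturbed scaled identity for $p=2$, and $\mathbf{I}_m-H_0$ with $\lambda_{\max}(H_0)<1$ for $p=\infty$---is the delicate, assumption-dependent heart of the argument.
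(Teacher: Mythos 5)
Your proposal is correct and follows essentially the same route as the paper: establish strong semismoothness of $\nabla\Theta$, then use Assumptions \ref{assum32} and \ref{assum33} to verify case by case ($p=1,2,\infty$) that the prox-Jacobian $V$ at $\hat u=\tau^{-1}\bar u-b+\tilde b^k$ is positive definite, hence $\tilde{\partial}^2\Theta(\bar u)$ is positive definite, and invoke the standard inexact semismooth Newton convergence theory for the rate. Your additional remarks (that the $\sigma^{-1}AUA^{\top}$ block is only semidefinite so the burden falls on $\tau^{-1}V$, and the sketch of the Armijo/unit-step transition) merely make explicit what the paper delegates to its citation of \cite{li38}.
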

\begin{proof}
Firstly,  the $\nabla \Theta(u)$ has been known to be strongly semismooth because the proximal mapping of the $\ell_p$-norm function with $p=1,2, \infty$ is strongly semismooth.
Secondly, note that $\hat{u}:= \tau^{-1}\bar{u}-b+\tilde{b}^k$,
then (\ref{ybar}) can be rewritten as
$\bar{y}=\text{Prox}_{\tau^{-1} \| \cdot \|_{p}}(\hat{u})$, or equivalently, from the Moreau's identity (\ref{moreau}) or Lemma \ref{lemma11} that
\begin{equation}\label{yy}
 \bar{y}=\hat{u}-\tau^{-1}\Pi_{B_q^{(1)}}(\tau \hat{u}),
\end{equation}
where $q$ satisfies $1/p+1/q=1$ and operations $1/\infty=0$ and $1/0=\infty$ are followed.
The Assumption \ref{assum32} indicates $\bar{y}\neq 0$ by noting the notation $\bar y=A\bar x-b$ defined in (\ref{PMMmodel2}), which means
$$
\hat{u}\neq \tau^{-1}(\Pi_{B_{q}^{(1)}}\tau \hat{u}).
$$
At the case of $p=1$, from Lemma \ref{lemma11} (a) and Assumption \ref{assum32}, we know for each $i$ that $|\hat{u}_i|>1/\tau$, and hence $\partial \text{Prox}_{\tau^{-1} \| \cdot \|_{1}}(\hat{u})$
is positive definite from Lemma \ref{lemma22} (a).
At the case of $p=2$, the fact $A\bar x-b\neq 0$ and Lemma \ref{lemma11} (b) shows that $\|\hat{u}\|_2>1/\tau$, which further indicates that
$\partial \text{Prox}_{\tau^{-1} \| \cdot \|_{2}}(\hat{u})$ is positive definite from Lemma \ref{lemma22} (b).
At the case of $p=\infty$,  the fact $A\bar x-b\neq 0$ or equivalently $\hat{u}\neq \tau^{-1}\Pi_{B_{q}^{(1)}}(\tau \hat{u})$,
means that $\|\hat{u}\|_1>1/\tau$ from Lemma \ref{lemma11} (c). Therefore,
$\partial \text{Prox}_{\tau^{-1} \| \cdot \|_{\infty}}(\hat{u})$ is positive definite from Lemma \ref{lemma22} (c) and Assumption \ref{assum33}.
Taking everything together, these analysis proved that the generalized Jocobian
$\tilde{\partial}^2\Theta(\bar{u})$ given in (\ref{partt}) is positive definite.
Based on this fact, the convergence rate of $\{u^j\}$ can be directly obtained from \cite[Theorem 3.5]{li38}.
\end{proof}

\section{Numerical experiments}\label{num4}

In this section, we conduct some numerical experiments to demonstrate the superiority of model (\ref{lp}) and the practical performance of Algorithm PMM\_$\ell_p$-$\ell_{1-2}$.
All the experiments are performed with Microsoft Windows 10 and MATLAB R2018a, and run on a PC with an Intel Core i7 CPU at 1.80 GHz and 8 GB of memory.

\subsection{General descriptions}

We conduct experiments on two types of sensing matrices. Define the test sensing matrix as $A=[a_1,\ldots, a_n]\in \mathbb{R}^{m\times n}$. One type matrix of incoherent with high probability is the random Gaussian matrix defined as
$$
a_{i} \stackrel{\text { i.i.d. } }{\sim} \mathcal{N}(0, I_{m} / m), \quad i=1, \ldots, n,
$$
and the random partial DCT matrix with the following expression
$$
a_{i}=\frac{1}{\sqrt{m}} \cos (2 i \pi \xi), \quad i=1, \ldots, n,
$$
where $\xi\in \mathbb{R}^m\sim\mathcal{U}([0,1]^m)$, i.e., the components of $\xi$ are uniformly and independently sampled from $[0,1]$.
The other type is ill-conditioned  matrix of significantly  higher coherence which is  a randomly oversampled partial DCT  matrix
$$
a_{i}=\frac{1}{\sqrt{m}} \cos (2 i \pi \xi / t), \quad i=1, \ldots, n,
$$
where $\xi\in \mathbb{R}^m\sim\mathcal{U}([0,1]^m)$ and $t$ is the parameter used to decide how coherent the matrix is. The larger $t$ is, then the higher the coherence is.

For comparison in a relatively fair way, we measure the quality of the reconstruction solutions using the relative error defined as
$$
  \text{RLNE}:=\frac{\|\bar x-\underline{x}\|_2}{\|\underline{x}\|_2},
$$
where $\bar x$ and $\underline{x}$ are the reconstructed and ground truth signals, respectively.
Besides, it has been shown in Theorem \ref{the2} (c) that $\|x^{k+1}-x^k\|_2\rightarrow 0$ as $k\rightarrow\infty$, so it is reasonable to terminate the iteration if
$$
\text{RelErr}:=\frac{\|x^{k+1}-x^k\|_2}{\max\{\|x^k\|_2,1\}}
$$
is sufficiently small. In each tested algorithm, we stop the iterative process if RelErr$\leq 1e-6$ or the iteration number achieves $2000$.
We fix the parameters $\rho=0.999$, $\mu=0.1$ and $\delta=0.5$, other parameters' values will  be determined adaptively at each experiment.

\subsection{Verify the superiorities of model (\ref{lp})}
In this part, to numerically show the superiorities of model (\ref{lp}), we also test against (\ref{l1l2}) by the using of two state-of-the-art solvers  ADMM\_$\ell_{1-2}$ and FB\_$\ell_{1-2}$ of \cite{lou15}.
The Matlab
package of both solvers is available at \url{https://github.com/mingyan08/ProxL1-L2} where the parameters are left to be its default settings.
\subsubsection{Test the robustness of $\|Ax-b\|_p$ under different noise types}
\begin{figure}[htbp]
\centering
\begin{subfigure}[b]{0.32\textwidth}
\includegraphics[width=\textwidth]{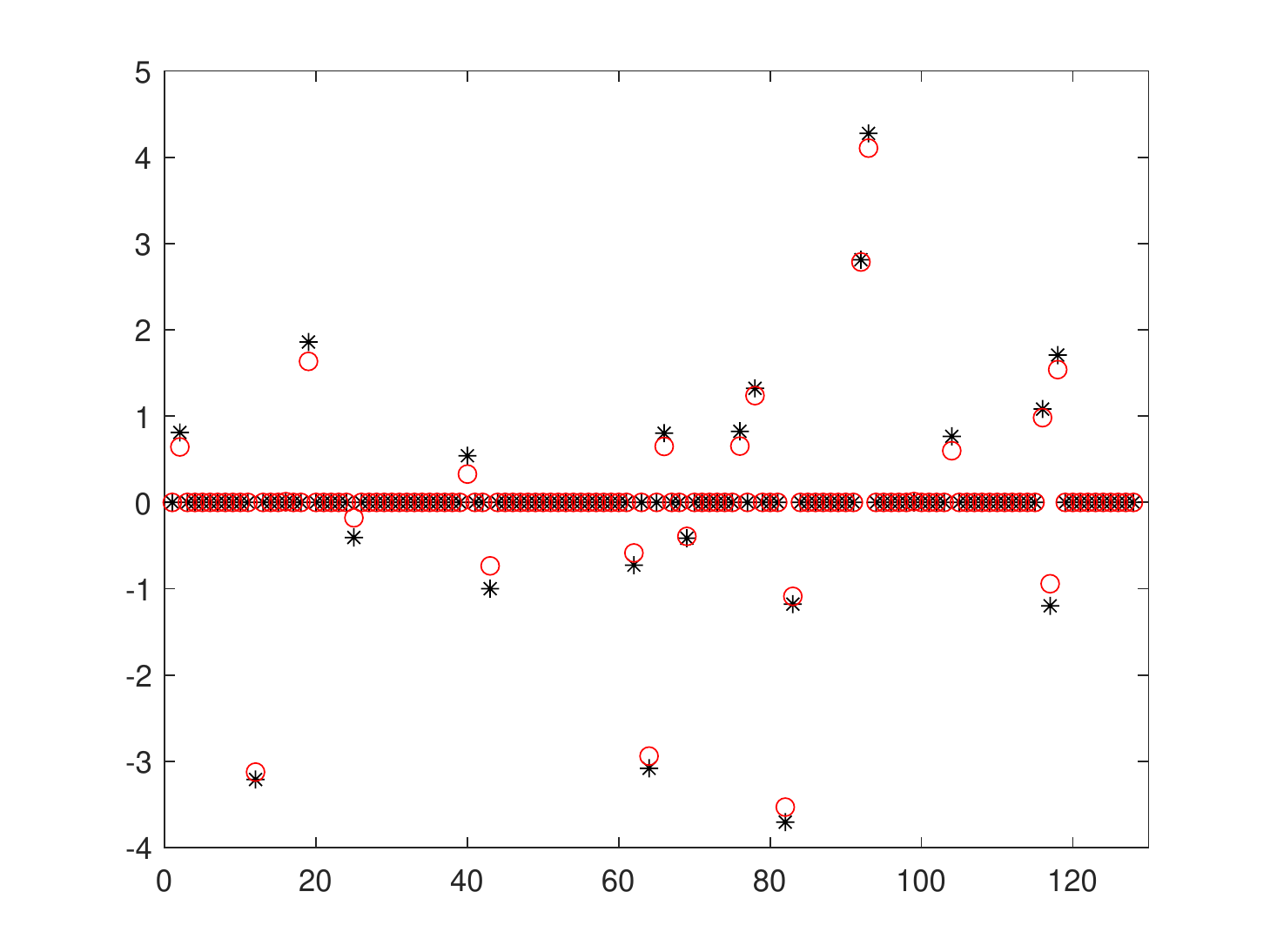}
\caption{RLNE $=6.17e-2$}
\end{subfigure}
\begin{subfigure}[b]{0.32\textwidth}
\includegraphics[width=\textwidth]{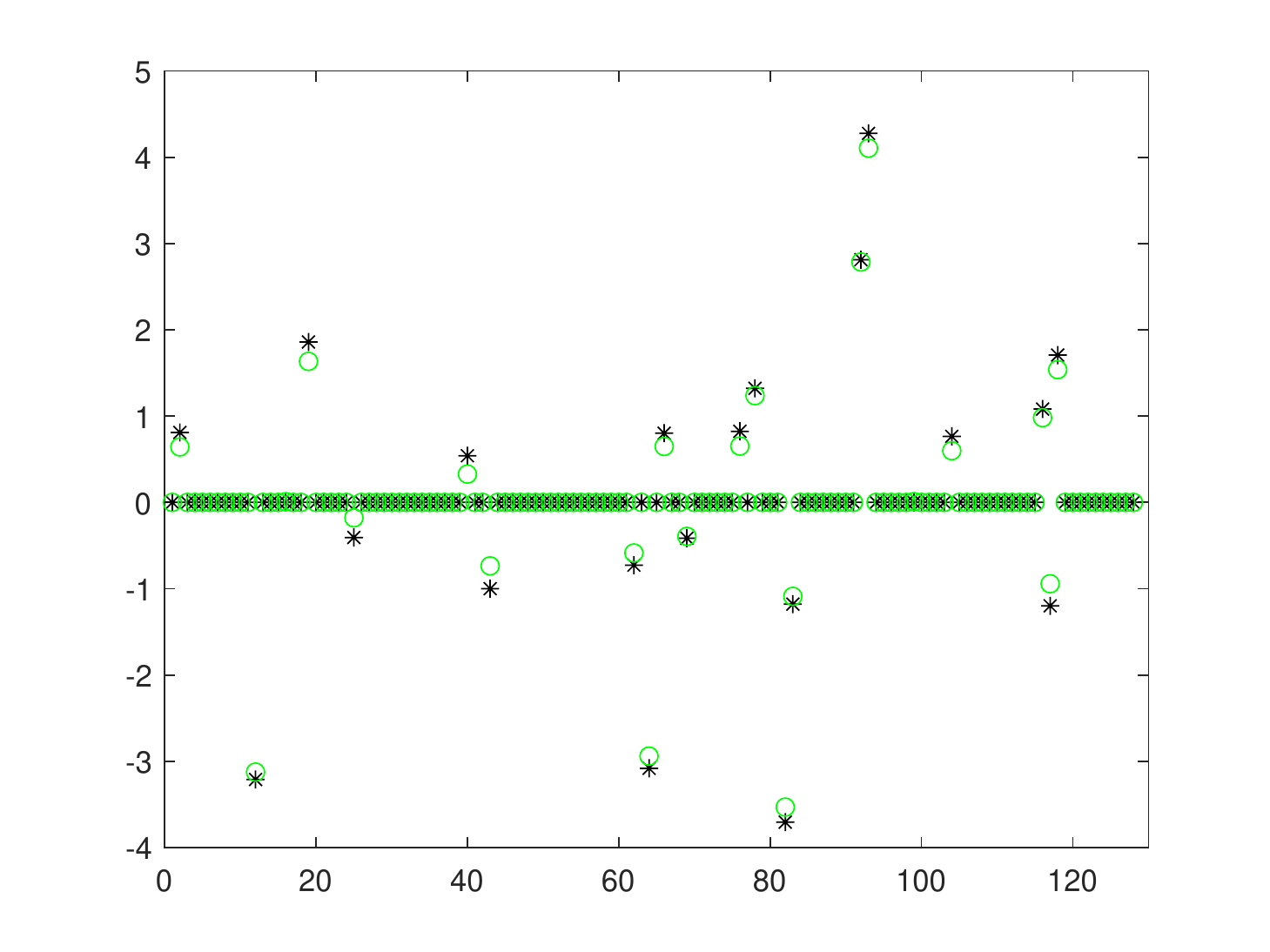}
\caption{RLNE $=6.17e-2$ }
\end{subfigure}
\begin{subfigure}[b]{0.32\textwidth}
\includegraphics[width=\textwidth]{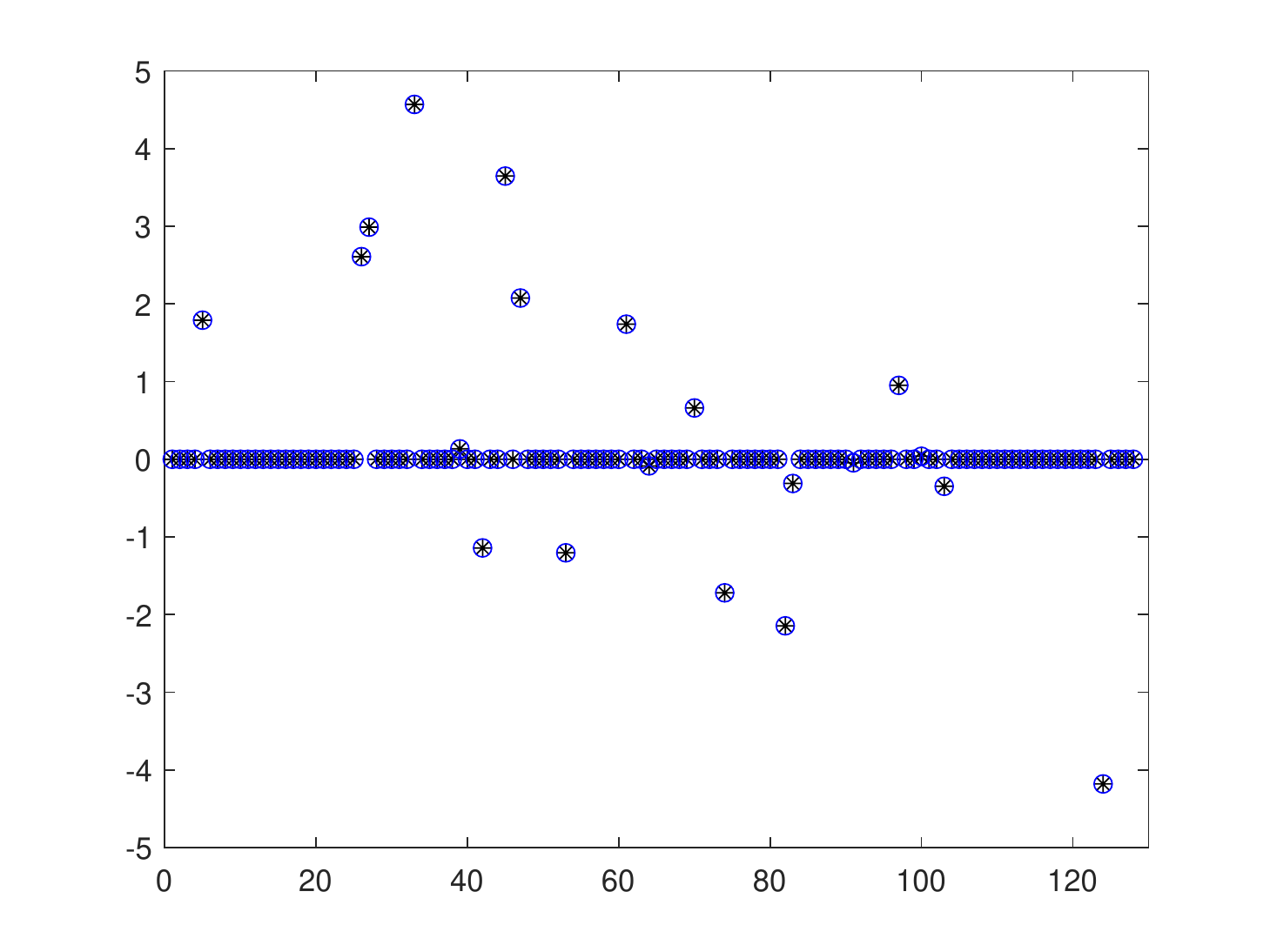}
\caption{RLNE $=8.47e-8$}
\end{subfigure}
\vspace{-.3cm}
\caption{\itshape Log-normal noise: the original signal (black stars) versus the recovery signals by ADMM\_$\ell_{1-2}$ (red circles),  FB\_$\ell_{1-2}$ (green circles), and by PMM\_$\ell_p$-$\ell_{1-2}$ (blue circles).}
\label{fig1}
\vspace{-.3cm}
\end{figure}
\begin{figure}[htbp]
\centering
\begin{subfigure}[b]{0.32\textwidth}
\includegraphics[width=\textwidth]{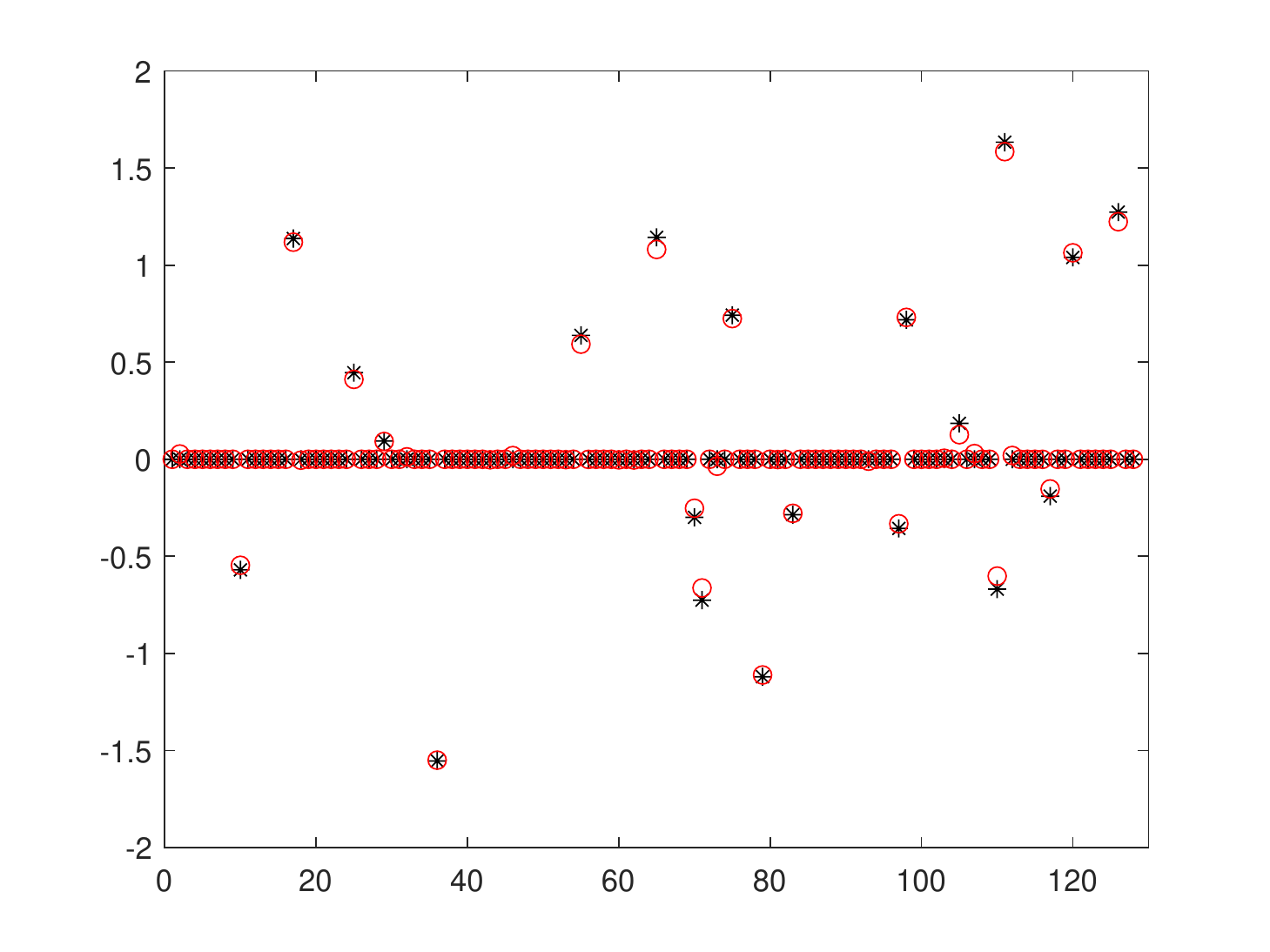}
\caption{RLNE $=4.76e-2$}
\end{subfigure}
\begin{subfigure}[b]{0.32\textwidth}
\includegraphics[width=\textwidth]{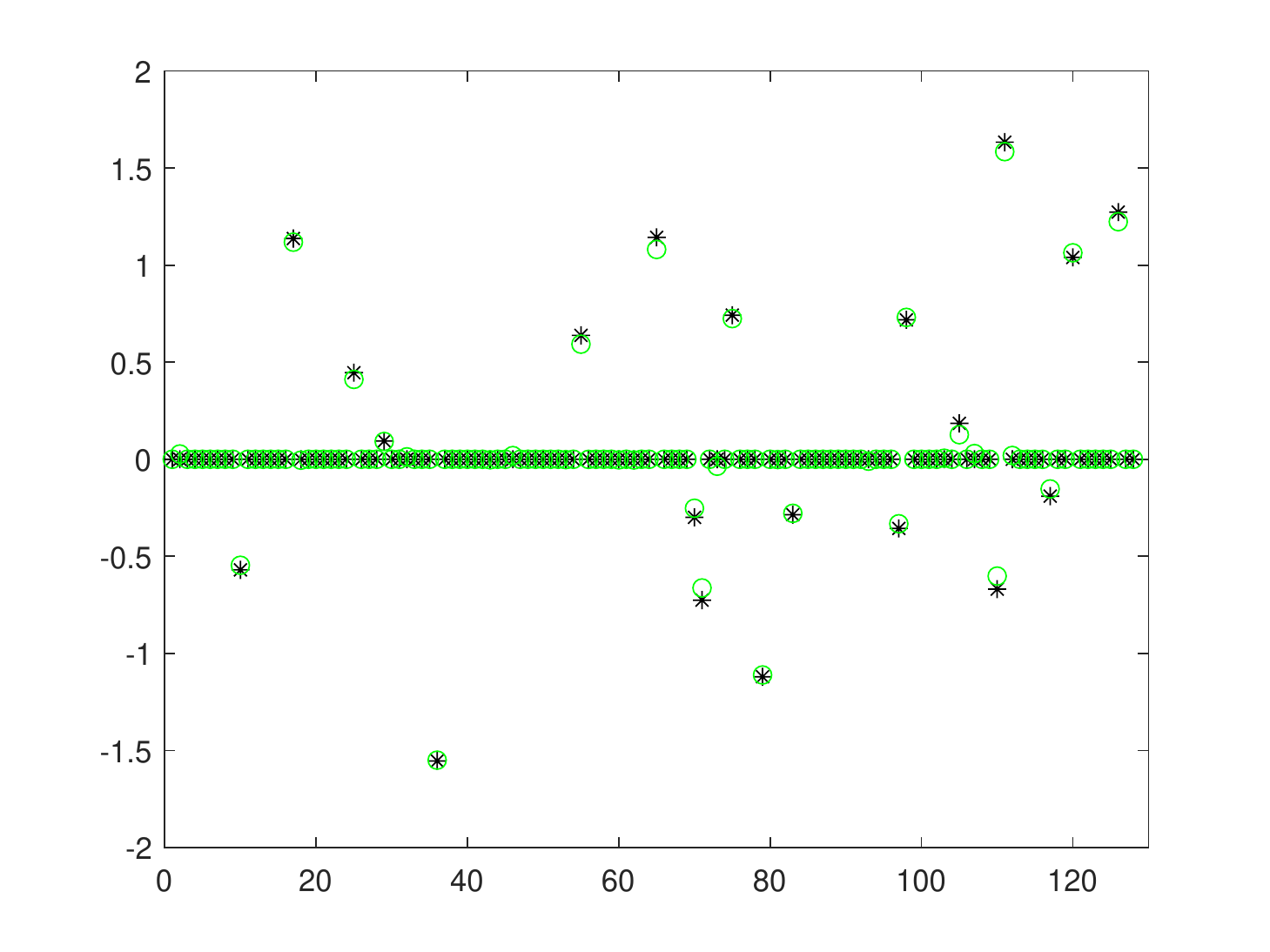}
\caption{RLNE $=4.76e-2$ }
\end{subfigure}
\begin{subfigure}[b]{0.32\textwidth}
\includegraphics[width=\textwidth]{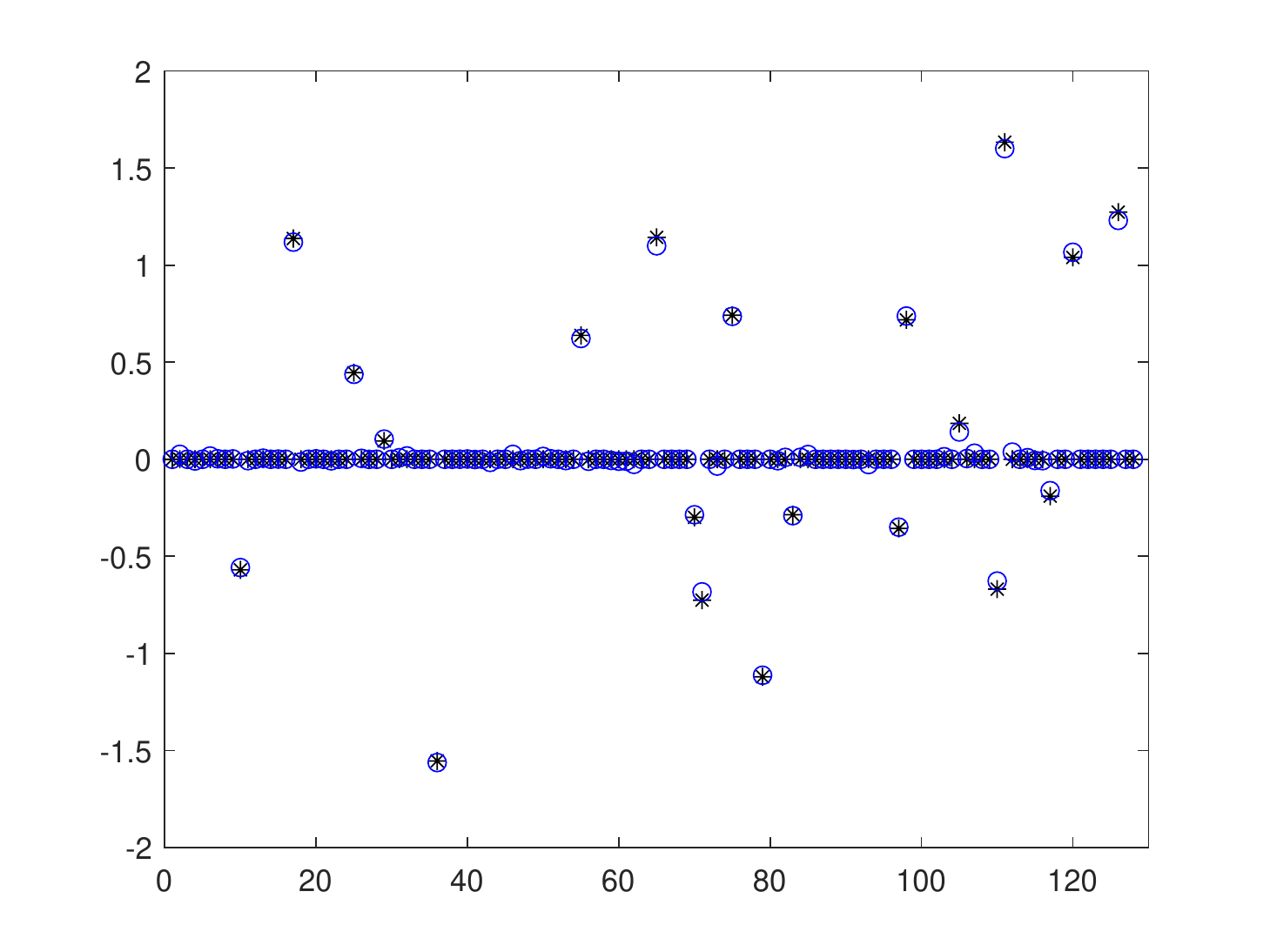}
\caption{RLNE $=3.92e-2$}
\end{subfigure}
\vspace{-.3cm}
\caption{\itshape Gaussian noise: the original signal (black stars) versus the recovery signals by ADMM\_$\ell_{1-2}$ (red circles),  FB\_$\ell_{1-2}$ (green circles), and by PMM\_$\ell_p$-$\ell_{1-2}$ (blue circles).}
\label{fig2}
\vspace{-.3cm}
\end{figure}
\begin{figure}[htbp]
\centering
\begin{subfigure}[b]{0.32\textwidth}
\includegraphics[width=\textwidth]{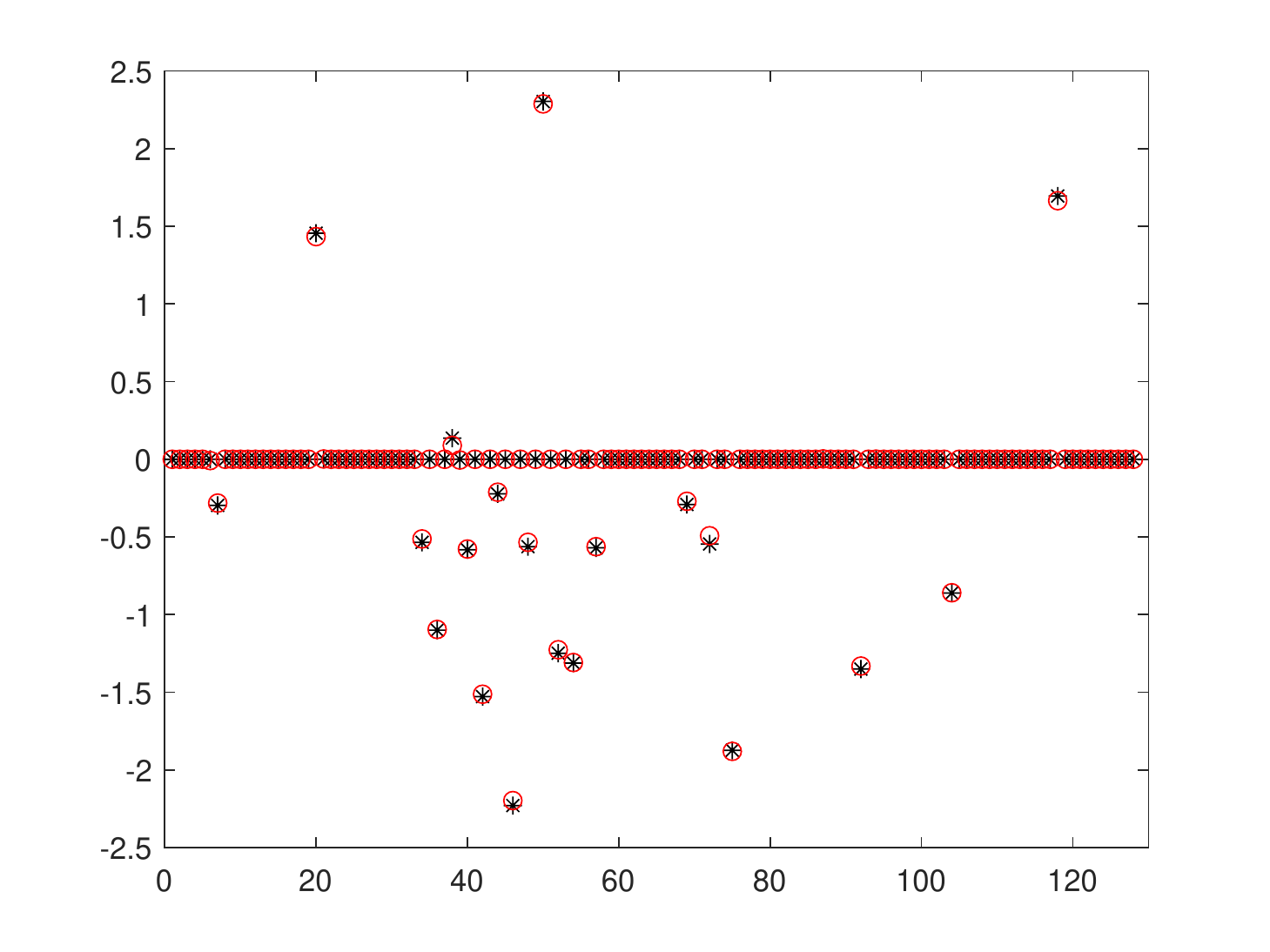}
\caption{RLNE $=1.92e-2$}
\end{subfigure}
\begin{subfigure}[b]{0.32\textwidth}
\includegraphics[width=\textwidth]{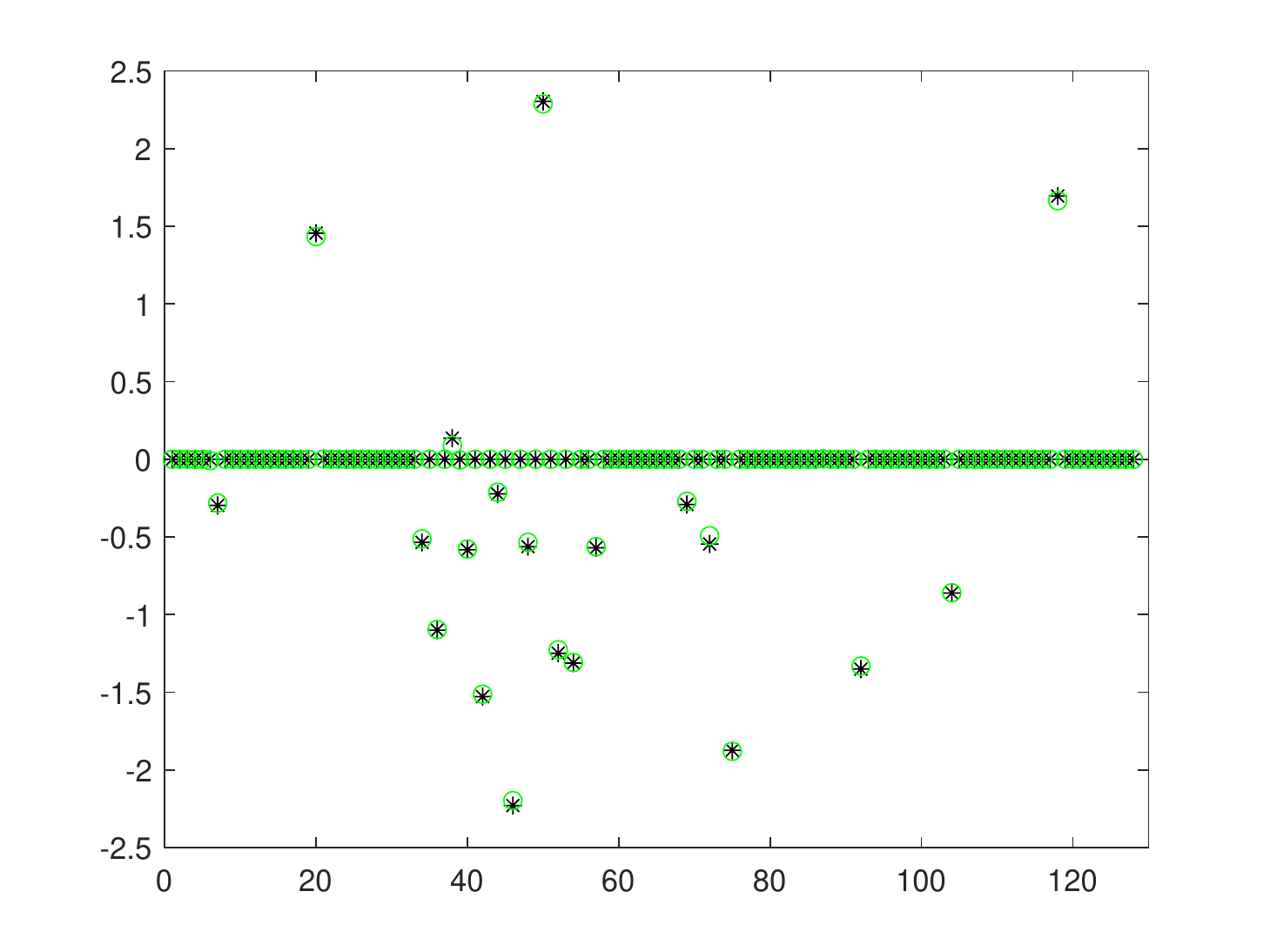}
\caption{RLNE $=1.92e-2$ }
\end{subfigure}
\begin{subfigure}[b]{0.32\textwidth}
\includegraphics[width=\textwidth]{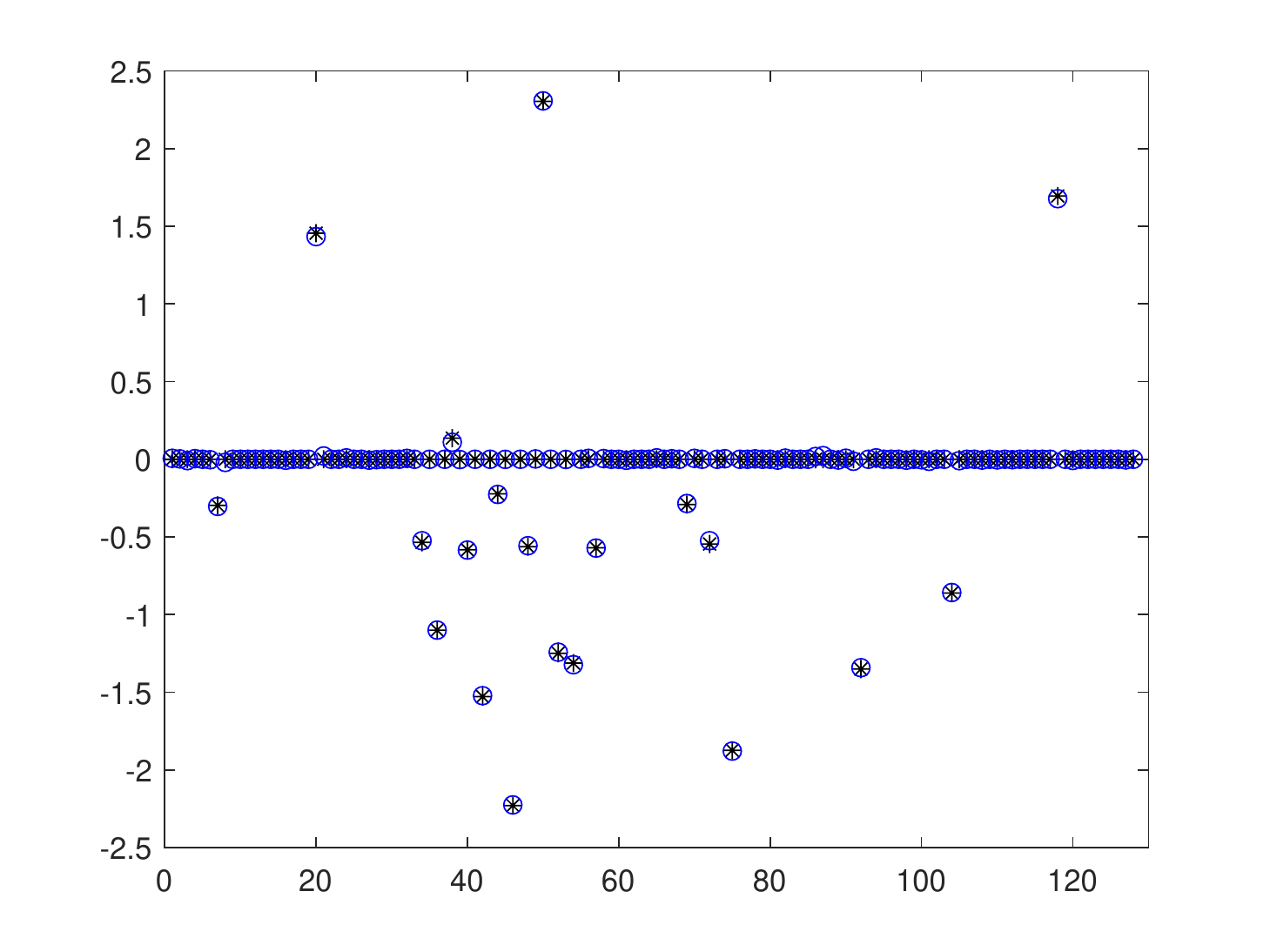}
\caption{RLNE $=1.37e-2$}
\end{subfigure}
\vspace{-.3cm}
\caption{\itshape Uniform noise: the original signal (black stars) versus the recovery signals by ADMM\_$\ell_{1-2}$ (red circles),  FB\_$\ell_{1-2}$ (green circles), and by PMM\_$\ell_p$-$\ell_{1-2}$ (blue circles).}
\label{fig3}
\vspace{-.3cm}
\end{figure}
In order to highlight the robustness and practicality of the model (\ref{lp}), we mainly test three different types of noises in this part. The observation $b$ is obtained by
$$
b=A*\underline{x}+\alpha*\text{noise},
$$
where $\alpha$ is the noise level, and ``noise" is the one of the following types of noise: log-normal noise, Gaussian noise, and uniform noise.
Here, we recover a sparse signal of being polluted by different kinds of noises via model (\ref{lp}) with different $p$ values.
In this test, we set the noise level as $\alpha=1e-2$ and choose a random partial DCT to be a sensing matrix with size $64\times128$.
We say that the a signal $x$ is $K$-sparse if the number of nonzero entries in $x$ is $K$.
In this test, we set $K=20$ to the ground truth signal $\underline{x}$. Finally, we also set $\beta\equiv1$ for the sake of simplicity.

At the case of log-normal noise, we set $p=1$, i.e., the data fidelity term is $\|Ax-b\|_1$. Besides, the weighting parameter $\lambda$ in (\ref{lp}) is chosen as $\lambda=8e-2$.
In Algorithm PMM\_$\ell_p$-$\ell_{1-2}$, we let $\tau^0=1e-1$ and $\sigma^0=\sqrt{2}\|AA^{\top}\|_2$, and in algorithms ADMM\_$\ell_{1-2}$ and FB\_$\ell_{1-2}$,
all the parameters are left to be their default settings. The original signal and the reconstructed signals recovered by
PMM\_$\ell_p$-$\ell_{1-2}$, ADMM\_$\ell_{1-2}$, and FB\_$\ell_{1-2}$ are listed respectively in Figure \ref{fig1}. In this figure, the original signal is denoted by black stars ``$\ast$" and the recovered signals are denoted by ``$\bigcirc$" marked in different color.
Comparing each plot from left to right,  we clearly see that all the stars in (c) are circled exactly by the blue circles with a symbol ``$\circledast$" which indicates that the using of
$\|Ax-b\|_1$ is  better.
Moreover, we also see that the final relative error of the solution derived by PMM\_$\ell_p$-$\ell_{1-2}$ is $8.47e-8$ which is significantly smaller than
the one produced by other algorithm, which once again indicates that
the advantage of $\|Ax-b\|_1$.
At the case of Gaussian noise, we set $p=2$ and  $\tau^0=2$ , i.e., the data fidelity term is $\|Ax-b\|_2$, and at the case of uniform noise, we set $p=\infty$ and  $\tau^0=1e-2$, i.e., the data fidelity term is $\|Ax-b\|_\infty$.
 Other parameters' values in both cases are chosen as
$\lambda=1e-2$ and $\sigma^0=\sqrt{2}\|AA^{\top}\|_2$.
To be fair, we find that the two comparison algorithms perform better when $\lambda=1e-2$, so values of the
parameters $\lambda=1e-2$ are fixed in ADMM\_$\ell_{1-2}$ and FB\_$\ell_{1-2}$ for $p=2, \infty$.
The results of each algorithm for both different types of noise are listed  in Figures \ref{fig2} and \ref{fig3}, respectively.
From these figures, we can visibly see that the quality of the solution derived by PMM\_$\ell_p$-$\ell_{1-2}$ is better.
From these limited numerical experiments, it can be concluded that, as far as the three types of noise are concerned, our proposed
model (\ref{lp}) has the ability to get produce higher quality reconstruction results if the data fidelity term is chosen adaptively.
\subsubsection{Test the superiority of $\ell_{1-2}$-term to $\ell_1$-norm under different sensing matrices and sparsity levels}

In this section, we test the superiority of the $\ell_{1-2}$-term in two ways, i.e., using different sensing matrices and using different sparsity levels.
To address the first issue, we test PMM\_$\ell_p$-$\ell_{1-2}$, ADMM\_$\ell_{1-2}$ and FB\_$\ell_{1-2}$ repeatedly by the using of three types of sensing matrix, say
random Gaussian matrix (GAUS), random partial DCT matrix (PDCT), and randomly oversampled partial DCT (ODCT).
At each tested case, we run each algorithm based on two types of sparsity. All the parameters' values are taken as the same as the ones previously except for the noise level $\alpha=1e-3$ and the weighting parameter $\beta$. In this test, the case of $\beta\equiv0$ means that only a $\ell_1$-norm is used. The results of each algorithm with respect to the final relative error are listed in Table \ref{tab1}.
From Table \ref{tab1}, we see that the RLNE values at the last column are always smaller than the corresponding ones at other two columns,
which once again shows that our proposed model (\ref{lp}) indeed benefits the quality of the reconstruction solutions.
Observing the results row-by-row, we find that, at most cases, the values at the case of $\beta=1$ are relatively smaller, which indicates that the ``$-\|\cdot\|_2$"-term  has the potential ability to extract sparse property.

\begin{table}[h]
\centering {\scriptsize\caption{Final RLNE values of each algorithm}
\begin{tabular}{cccc|c|c | c }
\hline
\multicolumn{4}{c|}{} & \multicolumn{1}{c|}{} & \multicolumn{1}{c|}{} & \multicolumn{1}{c}{}\\

Sensing matrix&Dimension&Sparsity&$\beta$&ADMM\_$\ell_{1-2}$&FB\_$\ell_{1-2}$&PMM\_$\ell_p$-$\ell_{1-2}$\\
\hline
\multirow{4}{0.8cm}{GAUS}
&$50\times 100 $&5 &0&2.03e-2 &2.03e-2  &4.80e-3  \\
&$50\times 100 $&5 &1&2.01e-2 &2.01e-2  &{5.30e-3}\\
&$400\times 800$&20&0&2.17e-2 &2.17e-2   &6.40e-3\\
&$400\times 800$&20&1&2.04e-2 &2.04e-2 &{6.10e-3}\\
\hline
\multirow{4}{0.8cm}{PDCT}
&$50\times 100$ &5&0&3.54e-2 &3.54e-2  &8.00e-3\\
&$50\times 100$&5&1&1.56e-2&1.56e-2&{5.40e-3}\\
&$400\times 800$&20&0&2.19e-2 &2.19e-2  &6.30e-3\\
&$400\times 800$&20&1&1.94e-2 &1.94e-2  &{6.20e-3}\\
\hline
\multirow{4}{0.8cm}{ODCT (t=10)}
&$100\times 200$ &5&0&5.22e-1&4.48e-1&4.39e-2\\
&$100\times 200$ &5&1&1.08e-1&1.08e-1 &{1.64e-2}\\
&$400\times 800$&10&0&7.40e-1 &7.41e-1 &3.08e-2\\
&$400\times 800$&10&1&3.74e-1 &3.07e-1 &{1.51e-2}\\
\hline
\multirow{4}{0.8cm}{ODCT (t=15)}
&$100\times 200$  &5&0&6.77e-1 &6.08e-1 &2.44e-1\\
&$100\times 200$  &5&1&1.09e-1&1.09e-1&{3.08e-2}\\
&$400\times 800$ &10&0&2.94e-1 &2.30e-1  &3.78e-1\\
&$400\times 800$ &10&1&1.36e-1 &1.36e-1  &{9.70e-3}\\
\hline
\end{tabular}\label{tab1}
}
\end{table}

We now turn our attention to using different sparsity levels to test the superiority of the $\ell_{1-2}$-term.
In this test, the sensing matrices are chosen as the random Gaussian matrix with size $200\times500$ and the randomly oversampled partial DCT matrix with $t=15$.
For the sake of simplicity, we fix $\beta\equiv1$ in the $\|x\|_1-\beta\|x\|_2$. Moreover, we also choose $\beta\equiv0$ for comparison and the names of the corresponding algorithms are abbreviated as ``$***\_{\ell_1}$".
The original signal $\underline{x}$  tested on random Gaussian matrix has a support size of $5 : 2 : 25$, which means that the support size starts from $5$ and ends at $25$
with an increment of $2$.
The true signal $\underline{x}$  used at the randomly oversampled partial DCT matrix case has a support size of $5:1:15$.
The statistical results on each tested case drawn in Figure \ref{fig4}.
 \begin{figure}[htbp]
\centering
\begin{subfigure}[b]{0.45\textwidth}
\includegraphics[width=\textwidth]{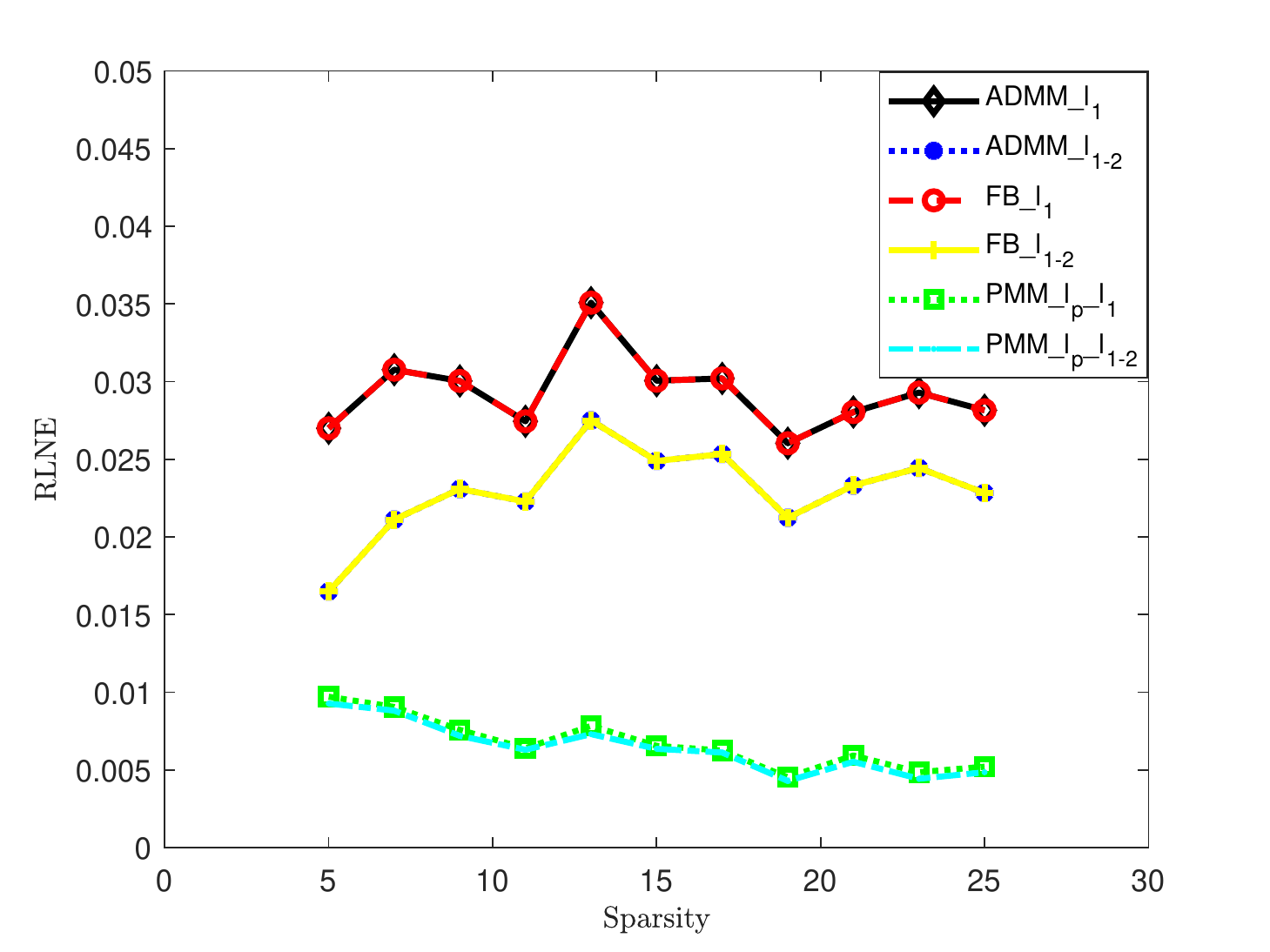}
\caption{Gaussian matrix}
\end{subfigure}
\begin{subfigure}[b]{0.45\textwidth}
\includegraphics[width=\textwidth]{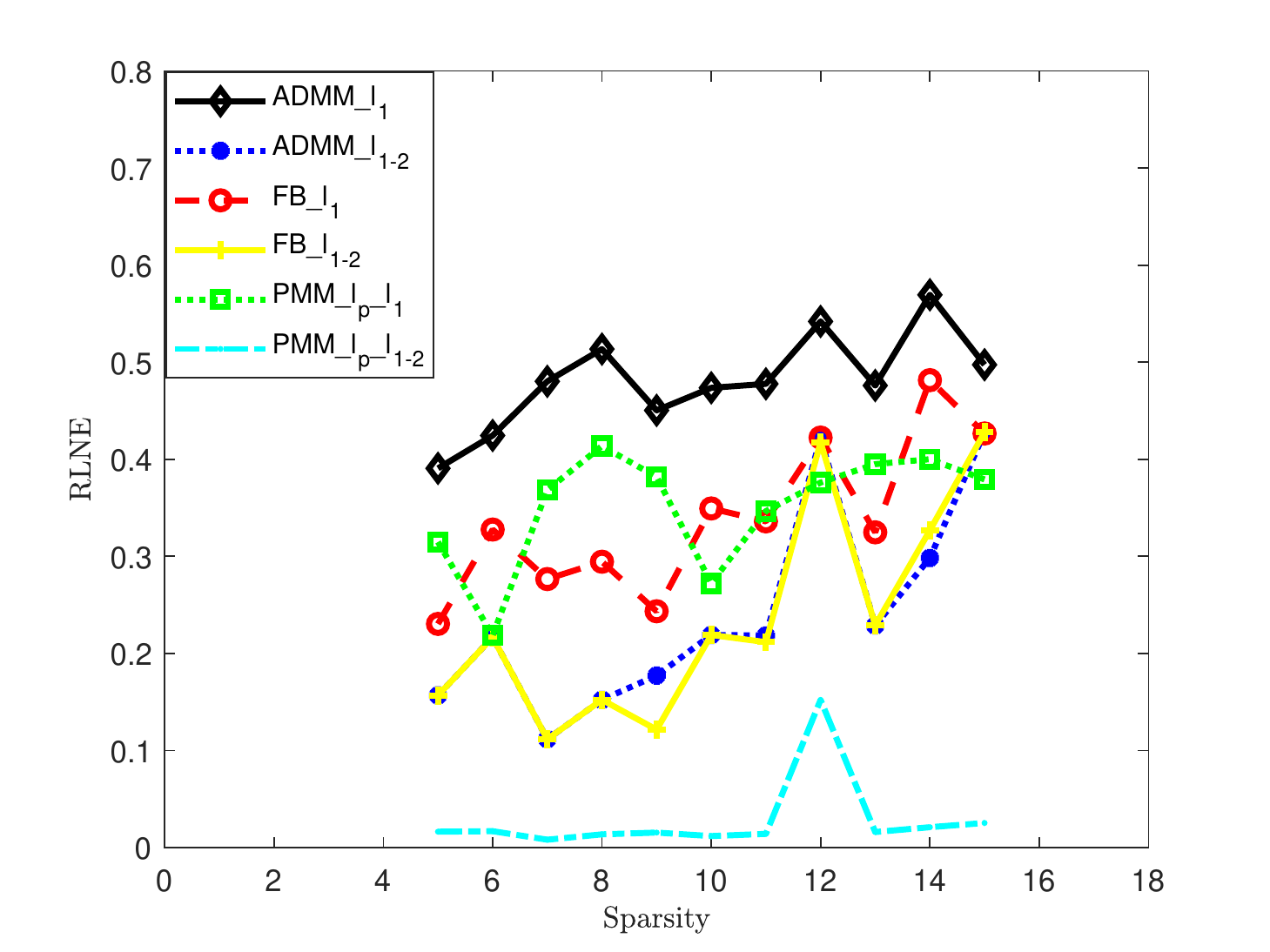}
\caption{ODCT matrix with $t=15$}
\end{subfigure}
\vspace{-.3cm}
\caption{\itshape RLNE values under different sparsity
 levels}
\label{fig4}
\vspace{-.3cm}
\end{figure}

As can be seen from left plot that, the curves derived by PMM\_$\ell_p$-$\ell_{1}$ and PMM\_$\ell_p$-$\ell_{1-2}$ are always at the bottom which show that our proposed approach is a winner.
Moreover, we also see that the curves derived by ``$***\_{\ell_1}$" and ``$***\_{\ell_{1-2}}$" are almost the same, which indicates that the ``$-\|\cdot\|_2$" term has hardly any influence
at the random Gaussian matrix case.
While we turn our attention to the right plot, it is easy to see that the curves derived by PMM\_$\ell_p$-$\ell_{1}$ lie under the one by ADMM\_$\ell_{1-2}$ and FB\_$\ell_{1-2}$ which once again shows our proposed approach is better. However, at each test case,  the curves derived by ``$***\_{\ell_1}$" and ``$***\_{\ell_{1-2}}$" are totally different, and the curves based on the ``$-\|\cdot\|_2$" term are always at the bottom. From this phenomenon, we conclude that the ``$-\|\cdot\|_2$" term is capable of improving the quality of the reconstruction solutions. Taking everything together, these experiments  under different sensing matrices and different sparsity levels show that the superiority of the proposed model in recovering sparse signals is obvious.

\subsection{Evaluate the Performance of PMM\_$\ell_p$-$\ell_{1-2}$ }
In this part, we test PMM\_$\ell_p$-$\ell_{1-2}$ against DCA\_ADMM --- a the difference of convex functions algorithm (DCA) in which the subproblem is solved by
alternating direction method of multipliers.
We firstly describe some implementation details of using ADMM to solve the second subproblem in (\ref{dc}).
Denote $y:=Ax-b$. The subproblem of DCA exhibited in (\ref{dc}) can be written equivalently as
\begin{equation}\label{p1}
\begin{array}{lll}
\min\limits_{x,y} & \|y\|_p +\lambda\|x\|_{1}-\lambda\beta\langle v^k,x\rangle  \\
\text{s.t.} & Ax-y=b.\\
\end{array}
\end{equation}
Let $\sigma>0$ be a penalty parameter, the augmented Lagrangian function associated with problem (\ref{p1})  is given by
\begin{align*}
\mathcal{L}_{\sigma}(x,y;u)&= \|y\|_p +\lambda\|x\|_{1}-\lambda\beta\langle v^k,x\rangle+\langle u, Ax-y-b\rangle+\frac{\sigma}{2}\|Ax-y-b\|^2_2,
\end{align*}
where $u\in\mathbb{R}^{m}$ is a  multiplier associated with the constraint.
Staring from an initial point $(x^{0},y^{0};u^{0})$, the semi-proximal ADMM for solving (\ref{p1}) is summarized as
\begin{equation}\label{sadmm}
\left\{
\begin{array}{l}
y^{j+1}=\argmin_{y}\big\{\|y\|_p+\frac{\sigma}{2}
\|Ax^j-y-b+u^j/\sigma\|_2^2+\frac{\sigma}{2}\|y-y^j\|_{S}^2\big\},
\\[3mm]
x^{j+1}=\argmin_{x}\big\{\lambda\|x\|_{1}-\lambda\beta\langle v^k,x\rangle+\frac{\sigma}{2}
\|Ax-y^{j+1}-b+u^j/\sigma\|_2^2+\frac{\sigma}{2}\|x-x^j\|_{T}^2\big\},
\\[3mm]
u^{j+1}=u^{j}+\tau\sigma\big(Ax^{j+1}-y^{j+1}-b\big),
\end{array}
\right.
\end{equation}
where $S$ and $T$ are weighted positive semi-definite matrices, and $\tau$ is steplength chosen in the interval $(0,(1+\sqrt{5})/2)$.
Choose $S:=0$ and $T:=\zeta I-\A^\top A$ where $\zeta>0$ be a positive scalar such that $T$ be positive semi-definite.
It is a trivial task to deduce that the $y$- and $x$-subproblems can be written as the following proximal mapping forms
$$
y^{j+1}=\operatorname{Prox}_{{\sigma}^{-1}\|\cdot\|_p}\Big(Ax^j-b+u^j/\sigma\Big),
$$
and
$$
x^{j+1}=\operatorname{Prox}_{(\zeta\sigma)^{-1}\lambda \|\cdot\|_1}\Big(\frac{A^\top[y^{j+1}+b-u^j/\sigma]+[\zeta I-A^\top A]x^j}{\zeta}+\frac{\lambda\beta v^k}{\zeta\sigma}\Big),
$$
which means that the iterative scheme (\ref{sadmm}) is easily implementable in the sense that both subproblems admit explicit form solutions from Lemma \ref{lemma11}.
We must emphasize that the iterative framework (\ref{sadmm}) is actually an implementation of the semi-proximal ADMM of Fazel et al. \cite{SEMP13}.
Hence, its convergence can be guaranteed under some technical conditions. For more details, one may refer to \cite[Theorem B]{SEMP13}.

We now compare the numerical performance of two different methods for solving problem (\ref{lp}).
In this test, we set the noise level as $\alpha=1e-3$ and choose diverse sensing matrices and different sparsity.
For Algorithm PMM\_$\ell_p$-$\ell_{1-2}$, we set the same parameters as Section 4.2.
For fairness, we take the same penalty parameter $\lambda$ and set $\beta=1$ in model (\ref{lp}) for both algorithms.
In addition, the parameter ${\sigma}$ in DCA\_ADMM and the initial $\sigma^0$ in PMM\_$\ell_p$-$\ell_{1-2}$ are fixed as $\sqrt{2}\|AA^{\top}\|_2$.
These settings always make both algorithms work well during the experiments' preparations.
For the $\ell_p$-norm in data fidelity term, we choose $p=1$ at the case of log-normal noise (LN), $p=2$ for Gaussian noise (GN), and $p=\infty$ for uniform noise (UN).
Detailed numerical results are reported in Table \ref{tab41} which contains the names of the noise types (Noise), the type of
sensing matrix (Matrix) with its dimensions (Dim), the CPU time required in seconds (Time), the final
objective function value of (\ref{lp}) (Obj), the values of RLNE and RelErr, and the number of
outer iterations (Iter). Besides, the symbol ``-" presents the algorithm failed to achieve convergence within $2000$ number of iterations.

\begin{table}[htbp]
\centering {\tiny\caption{The performance of the PMM\_$\ell_p$-$\ell_{1-2}$ and DCA\_ADMM.}
\begin{tabular}{lrrrr|rrrr|rrrr}
\hline
\multicolumn{5}{c|}{} & \multicolumn{4}{c|}{PMM\_$\ell_p$-$\ell_{1-2}$} & \multicolumn{4}{c}{DCA\_ADMM}\\
\hline
Noise  & Matrix(Dim) & K &$\lambda$&$\sigma^0(\sigma) $& Time & Obj  & RLNE & Iter & Time & Obj  & RLNE & Iter \\
\hline
\multirow{4}{0.2cm}{LN}
&GAUS
  ($100\times 200$)& 10 & 0.02& 1 &0.0541&0.4873&9.61e-9&15&0.0773&0.4873&3.64e-6&64\\
&GAUS
  ($400\times 800$) & 20 & 0.04& 2 &0.8318&0.8546&1.97e-10&14&1.3320&0.8546&3.65e-6&39\\
&PDCT
  ($200\times 400$) & 10 & 0.06&2 &0.0515&1.3326&4.87e-10&5&0.1005&1.3326&9.38e-7&16\\
&PDCT
  ($400\times 800)$ & 20 &0.08&1.5&0.2232&2.1734&4.37e-10&5&0.5021&2.1734&7.00e-7&14\\
  \hline
\multirow{4}{0.2cm}{GN}
&GAUS
  ($100\times 200$) & 10 &0.005&1&0.1499&0.0281&0.0110&20&0.1612&0.081&0.0110&20\\
&GAUS
  ($400\times 800$) & 20 &0.015&2&4.1701& 0.1795&0.0134&20&6.2627&0.1795&0.0144&16\\
&ODCT(t=5)
  ($100\times 200$) & 10 &0.08&0.1&0.1225& 0.4224&0.0067&8&3.0180&0.4724&0.0040&-\\
&ODCT(t=10)
  ($200\times 400$) & 15 &0.05&0.3&3.0582& 0.5294&0.0179&93&146.3076&0.5385&0.3085&-\\
   \hline
\multirow{4}{0.2cm}{UN}
&GAUS
  ($64\times 128$)& 10 & 0.005&2&2.7091&0.0244&0.0060&213&8.9643&1.7752&1.2280&-\\
&GAUS
  ($128\times 256$) & 15 & 0.001&2&6.1933&0.0084&0.0054&803&22.7704&2.5062&1.2862&-\\
&PDCT
  ($64\times 128$) & 10 & 0.01&2&0.4581&0.0378&0.0028&21&9.0736&2.2083&1.0323&-\\
&PDCT
  ($128\times 256)$ & 15 &0.005&2&2.3322&0.0531&0.0019&51&26.7144&42.7302&0.6863&-\\
\hline
\end{tabular}\label{tab41}
}
\end{table}
At the fist place, we can clearly see that the final RLNE and objective function values are obviously smaller than DCA\_ADMM at all the instances.
On the one hand, we see that PMM\_$\ell_p$-$\ell_{1-2}$ can successfully solve the problem all the instances to the desired accuracy within hundreds or even dozens of steps,
while DCA\_ADMM must be stopped when it reaches the maximum number of $2000$ iterations at some cases.
On the other hand, we can observe that PMM\_$\ell_p$-$\ell_{1-2}$  always takes much less time than DCA\_ADMM.
For example, for the instance GN with ODCT(t=5) matrix, we can see that  PMM\_$\ell_p$-$\ell_{1-2}$ is nearly $30$ times faster than DCA\_ADMM.
In addition, at the ODCT(t=10) sensing matrix case,  PMM\_$\ell_p$-$\ell_{1-2}$ can solve the instance within seconds while
DCA\_ADMM reaches the maximum of $20000$ iterations and consumes more than $2$ minuets but
only produces a rather lower accuracy solution.
Overall, we conclude that PMM\_$\ell_p$-$\ell_{1-2}$ is clearly more robust and efficient than DCA\_ADMM on the limited experiments.

\section{Conclusions}\label{con5}

The compressive sensing theories offered the possibilities to reconstruct a large and sparse signal from highly undersampled data and remove the possible noise simultaneously.
However, the selection of the data fidelity type is known to be much noise depending.
Besides, the efficiencies of almost all the reconstruction models depend heavily on the corresponding numerical algorithms.
Hence, designing a flexible model along with an efficient algorithm which is capable of dealing with more types of noise is especially important.
Using the difference between $\ell_1$-norm and $\ell_2$-norm as a regularization instead of using the $\ell_1$-norm alone has been numerically shown to be more efficiency to extract sparse property even under high coherent condition.
While enjoying these advantages, at the same time, it also bring much difficulties because the ``$\|x\|_1-\beta\|x\|_2$" term is nonsmooth and nonconvex, and the ``$\|Ax-b\|_p$" term is nonsmooth.
To address these issues, we used a proximal majorization technique to make the term ``$-\|x\|_2$" be convex, and then employed a semismooth Newton method to solve the resulting semismooth equations.
We must emphasize that the $x$- and $y$-subproblems in Step $4$ of Algorithm SSN admits analytical solutions if $p$ is chosen as $1$, $2$, or $\infty$, which indicates that the algorithm is easily implementable.
Finally, we did a series of numerical experiments using different noise types, different sensing matrices, and different sparsity levels.
The numerical results showed that the robustness of the proposed model are very evident and the performance of the proposed algorithm is very clear.
Despite this, the Assumption \ref{assum33} to ensure the positive definiteness of the generalized Jacobian $\tilde{\partial}^2\Theta(\bar{u})$ at the case of $p=\infty$ looks strong.
Hence, some theoretical improvements to this issue is highly required.
But this doesn't affect us to believe that PMM\_$\ell_p$-$\ell_{1-2}$ is a valid for sparse signal reconstructing and it may have its own extraordinary potency in other related problems.
At last but not at least, to the best of our knowledge, PMM\_$\ell_p$-$\ell_{1-2}$ is the first algorithm to solve (\ref{lp}), and hence, other efficient algorithms are worthy of designing.
\section*{Acknowledgements}
We would like to thank the anonymous referees for their useful comments and suggestions which
improved this paper greatly.
We would like to thanks professor P. P. Tang from Zhejiang University City College
for her valuable discussions on the proof of Theorem \ref{the33}.
The work of H.  Zhang is supported by the National Natural Science Foundation of
China (Grant No. 11771003). The work of Y. Xiao is supported by the National Natural Science Foundation of China (Grant No. 11971149).

\bibliographystyle{Chicago}
\bibliography{ssnl1_2}

\begin{thebibliography}{}

\bibitem[\protect\citeauthoryear{Bellec, Lecue, and Tsybakov}{Bellec
  et~al.}{2018}]{bell22}
Bellec, P.~C., G.~Lecue, and A.~B. Tsybakov (2018).
\newblock {Slope meets lasso: improved oracle bounds and optimality}.
\newblock {\em The Annals of Statistics\/}~{\em 46}, 3603--3642.

\bibitem[\protect\citeauthoryear{Belloni and Chernozhukov}{Belloni and
  Chernozhukov}{2011}]{bell24}
Belloni, A. and V.~Chernozhukov (2011).
\newblock {L1-penalized quantile regression in high-dimensional sparse models}.
\newblock {\em The Annals of Statistics\/}~{\em 39}, 82--130.

\bibitem[\protect\citeauthoryear{Belloni, Chernozhukov, and Wang}{Belloni
  et~al.}{2011}]{Bell21}
Belloni, A., V.~Chernozhukov, and L.~Wang (2011).
\newblock {Square-root lasso: pivotal recovery of sparse signals via conic
  programming}.
\newblock {\em Biometrika\/}~{\em 98}, 791--806.

\bibitem[\protect\citeauthoryear{Candes and Tao}{Candes and Tao}{2005}]{Cand3}
Candes, E.~J. and T.~Tao (2005).
\newblock {Decoding by linear programming}.
\newblock {\em IEEE Transactions on Information Theory\/}~{\em 51}, 4203--4215.

\bibitem[\protect\citeauthoryear{Chen, Ge, Wang, and Ye}{Chen
  et~al.}{2014}]{chen17}
Chen, X.~J., D.~D. Ge, Z.~Z. Wang, and Y.~Y. Ye (2014).
\newblock {Complexity of unconstrained $\ell_2$-$\ell_p$ minimization}.
\newblock {\em Mathematical Programming\/}~{\em 143}, 371--383.

\bibitem[\protect\citeauthoryear{Chen, Xu, and Ye}{Chen et~al.}{2010}]{chen18}
Chen, X.~J., F.~M. Xu, and Y.~Y. Ye (2010).
\newblock {Lower bound theory of nonzero entries in solutions of
  $\ell_2$-$\ell_p$ minimization}.
\newblock {\em SIAM Journal on Scientific Computing\/}~{\em 32}, 2832--2852.

\bibitem[\protect\citeauthoryear{Clarke}{Clarke}{1983}]{Clarke37}
Clarke, F.~H. (1983).
\newblock {Optimization and Nonsmooth Analysis}.
\newblock {\em Wiley, New York\/}.

\bibitem[\protect\citeauthoryear{Cui, Pang, and Sen}{Cui et~al.}{2018}]{cui23}
Cui, Y., J.~S. Pang, and B.~Sen (2018).
\newblock {Composite difference-max programs for modern statistical estimation
  problems}.
\newblock {\em SIAM Journal on Optimization\/}~{\em 28}, 3344--3374.

\bibitem[\protect\citeauthoryear{Daubechies, DeVore, Fornasier, and
  Gunturk}{Daubechies et~al.}{2009}]{chen20}
Daubechies, I., R.~DeVore, M.~Fornasier, and C.~Gunturk (2009).
\newblock {Iteratively reweighted least squares minimization for sparse
  recovery}.
\newblock {\em Communications on Pure and Applied Mathematics\/}~{\em 63},
  1--38.

\bibitem[\protect\citeauthoryear{Donoho}{Donoho}{2006}]{Donoho4}
Donoho, D.~L. (2006).
\newblock {Compressed sensing}.
\newblock {\em IEEE Transactions on Information Theory\/}~{\em 52}, 1289--1306.

\bibitem[\protect\citeauthoryear{Donoho and Elad}{Donoho and
  Elad}{2003}]{Don10}
Donoho, D.~L. and M.~Elad (2003).
\newblock {Optimally sparse representation in general (nonorthogonal)
  dictionaries via $\ell_1$ minimization}.
\newblock {\em Proceedings of the National Academy of Sciences of the United
  States of America\/}~{\em 100}, 2197--2202.

\bibitem[\protect\citeauthoryear{Facchinei and Kanzow}{Facchinei and
  Kanzow}{1997}]{FK97}
Facchinei, F. and C.~Kanzow (1997).
\newblock {A nonsmooth inexact Newton method for the solution of large-scale
  nonlinear complementarity problems}.
\newblock {\em Mathematical Programming\/}~{\em 76}, 4930--512.

\bibitem[\protect\citeauthoryear{Fazel, Pong, Sun, and Tseng}{Fazel
  et~al.}{2013}]{SEMP13}
Fazel, M., T.~K. Pong, D.~F. Sun, and P.~Tseng (2013).
\newblock {Hankel matrix rank minimization with applications in system
  identification and realization}.
\newblock {\em SIAM Journal on Matrix Analysis and Applications\/}~{\em
  34\/}(3), 946--977.

\bibitem[\protect\citeauthoryear{Gribonval and Nielsen}{Gribonval and
  Nielsen}{2003}]{Gri11}
Gribonval, R. and M.~Nielsen (2003).
\newblock {Sparse representations in unions of bases}.
\newblock {\em IEEE Transactions on Information Theory\/}~{\em 49}, 320--3325.

\bibitem[\protect\citeauthoryear{Hiriart-Urruty and Lemarechal}{Hiriart-Urruty
  and Lemarechal}{2013}]{Hiriart33}
Hiriart-Urruty, J.~B. and C.~Lemarechal (2013).
\newblock {Convex Analysis and Minimization Algorithms I: Fundamentals}.
\newblock {\em Springer Science $\&$ Business Media\/}.

\bibitem[\protect\citeauthoryear{Kummer}{Kummer}{1988}]{KUMMER}
Kummer, B. (1988).
\newblock {Newton's method for non-differentiable functions}.
\newblock {\em Advances in Mathematical Optimization\/}, 114--125.

\bibitem[\protect\citeauthoryear{Lemarechal and Sagastizabal}{Lemarechal and
  Sagastizabal}{1997}]{Lema34}
Lemarechal, C. and C.~Sagastizabal (1997).
\newblock {Practical aspects of the Moreau-Yosida regularization: Theoretical
  preliminaries}.
\newblock {\em SIAM Journal on Optimization\/}~{\em 7}, 367--385.

\bibitem[\protect\citeauthoryear{Li, Sun, and Toh}{Li et~al.}{2018}]{li38}
Li, X.~D., D.~F. Sun, and K.-C. Toh (2018).
\newblock {A highly efficient semismooth Newton augmented Lagrangian method for
  solving Lasso problems}.
\newblock {\em SIAM Journal on Optimization\/}~{\em 28}, 433--458.

\bibitem[\protect\citeauthoryear{Lin, Sun, and Toh}{Lin et~al.}{2020}]{lin38}
Lin, M.~X., D.~F. Sun, and K.-C. Toh (2020).
\newblock {Efficient algorithms for multivariate shape-constrained convex
  regression problems}.
\newblock {\em \url{https://arxiv.org/pdf/2002.11410.pdf}\/}.

\bibitem[\protect\citeauthoryear{Lou, Osher, and Xin}{Lou et~al.}{2015}]{lou13}
Lou, Y.~F., S.~Osher, and J.~Xin (2015).
\newblock {Computational aspects of L1-L2 minimization for compressive
  sensing}.
\newblock {\em Advances in Intelligent Systems $\&$ Computing\/}~{\em 359},
  169--180.

\bibitem[\protect\citeauthoryear{Lou and Yan}{Lou and Yan}{2018}]{lou16}
Lou, Y.~F. and M.~Yan (2018).
\newblock {Fast L1-L2 minimization via a proximal operator}.
\newblock {\em Journal of Scientific Computing\/}~{\em 74}, 767--785.

\bibitem[\protect\citeauthoryear{Lou, Yin, He, and Xin}{Lou
  et~al.}{2015}]{lou14}
Lou, Y.~F., P.~H. Yin, Q.~He, and J.~Xin (2015).
\newblock {Computing sparse representation in a highly coherent dictionary
  based on difference of L1 and L2}.
\newblock {\em Journal of Scientific Computing\/}~{\em 64}, 178--196.

\bibitem[\protect\citeauthoryear{Lu}{Lu}{2014}]{lu26}
Lu, Z.~S. (2014).
\newblock {Iterative reweighted minimization methods for $l_p$ regularized
  unconstrained nonlinear programming}.
\newblock {\em Mathematical Programming\/}~{\em 147}, 277--307.

\bibitem[\protect\citeauthoryear{Martinez and Qi}{Martinez and Qi}{1995}]{MQ95}
Martinez, J.~M. and L.~Q. Qi (1995).
\newblock {Inexact Newton methods for solving nonsmooth equations}.
\newblock {\em Journal of Computational and Applied Mathematics\/}~{\em 60},
  127--145.

\bibitem[\protect\citeauthoryear{Mifflin}{Mifflin}{1977}]{Miff29}
Mifflin, R. (1977).
\newblock {Semismooth and semiconvex functions in constrained optimization}.
\newblock {\em SIAM Journal on Control and Optimization\/}~{\em 15}, 959--972.

\bibitem[\protect\citeauthoryear{Moreau}{Moreau}{1964}]{moreau31}
Moreau, J.~J. (1964).
\newblock {Proximit\'{e} et dualit\'{e} dans un espace hilbertien}.
\newblock {\em Bulletin de la Soci\'{e}t\'{e} Math\'{e}matique de
  France\/}~{\em 93}, 273--299.

\bibitem[\protect\citeauthoryear{Qi}{Qi}{1993}]{QI93}
Qi, L.~Q. (1993).
\newblock {Convergence analysis of some algorithms for solving nonsmooth
  equations}.
\newblock {\em Mathematics of Operations Research\/}~{\em 18}, 227--244.

\bibitem[\protect\citeauthoryear{Qi and Sun}{Qi and Sun}{1993}]{qi30}
Qi, L.~Q. and J.~Sun (1993).
\newblock {A nonsmooth version of Newton's method}.
\newblock {\em Mathematical programming\/}~{\em 58}, 353--367.

\bibitem[\protect\citeauthoryear{Rockafellar}{Rockafellar}{1970}]{Rock35}
Rockafellar, R.~T. (1970).
\newblock {Convex Analysis}.
\newblock {\em Princeton University Press\/}.

\bibitem[\protect\citeauthoryear{Rockafellar and Wets}{Rockafellar and
  Wets}{1998}]{Rock36}
Rockafellar, R.~T. and R.~J.-B. Wets (1998).
\newblock {Variational Analysis}.
\newblock {\em Springer, New York\/}.

\bibitem[\protect\citeauthoryear{Tang, Wang, Sun, and Toh}{Tang
  et~al.}{2020}]{tang1}
Tang, P.~P., C.~J. Wang, D.~F. Sun, and K.-C. Toh (2020).
\newblock {A sparse semismooth Newton based proximal majorization-minimization
  algorithm for nonconvex square-root-loss regression problems}.
\newblock {\em Journal of Machine Learning Research\/}~{\em 21}, 1--38.

\bibitem[\protect\citeauthoryear{Wang}{Wang}{2013}]{wang25}
Wang, L. (2013).
\newblock {$L_1$ penalized LAD estimator for high dimensional linear
  regression}.
\newblock {\em Journal of Multivariate Analysis\/}~{\em 120}, 135--151.

\bibitem[\protect\citeauthoryear{Wen, Ching, and Ng}{Wen et~al.}{2018}]{wen}
Wen, Y.~W., W.~K. Ching, and M.~Ng (2018).
\newblock {A Semi-smooth Newton Method for Inverse Problem with Uniform Noise}.
\newblock {\em Journal of Scientific Computing\/}~{\em 75}, 713--732.

\bibitem[\protect\citeauthoryear{Xiu, Kong, Li, and Qi}{Xiu
  et~al.}{2018}]{xiu28}
Xiu, X.~C., L.~C. Kong, Y.~Li, and H.~D. Qi (2018).
\newblock {Iterative reweighted methods for $\ell_1-\ell_p$ minimization}.
\newblock {\em Computational Optimization and Applications\/}~{\em 70},
  201--219.

\bibitem[\protect\citeauthoryear{Xue, Feng, and Wu}{Xue et~al.}{2019}]{xue27}
Xue, Y.~H., Y.~F. Feng, and C.~L. Wu (2019).
\newblock {An efficient and globally convergent algorithm for
  $\ell_{p,q}$-$\ell_{r}$ model in group sparse optimization}.
\newblock {\em Communications in Mathematical Sciences\/}~{\em 18}, 227--258.

\bibitem[\protect\citeauthoryear{Yin, Lou, He, and Xin}{Yin
  et~al.}{2015}]{lou15}
Yin, P.~H., Y.~F. Lou, Q.~He, and J.~Xin (2015).
\newblock {Minimization of $\ell_{1-2}$ for compressed sensing}.
\newblock {\em SIAM Journal on Scientific Computing\/}~{\em 37}, A536--A563.

\bibitem[\protect\citeauthoryear{Yosida}{Yosida}{1964}]{Yosida32}
Yosida, K. (1964).
\newblock {Functional Analysis}.
\newblock {\em Springer, Berlin\/}.

\bibitem[\protect\citeauthoryear{Zhang and Wei}{Zhang and Wei}{2015}]{zhang40}
Zhang, Z. and W.~Y. Wei (2015).
\newblock {Primal-Dual approach for uniform noise removal}.
\newblock {\em First International Conference on Information Science and
  Electronic Technology (ISET 2015)\/}.

\end{thebibliography}
\end{document}